\title{Summary of progress on the Blaschke conjecture}
\author{Benjamin McKay}
\email{b.mckay@ucc.ie}
\thanks{Thanks to Werner Ballmann, Karsten Grove, Linus Kramer and Juan-Carlos Alvarez Paiva for their help.}
\newcommand*{\mathlabel}[1]{\footnotesize\(#1\)}
\newcommand*{\of}[1]{\ensuremath{\!\left(#1\right)}}
\newcommand*{\K}[2]{\ensuremath{\mathbb{#1}^{#2}}}
\newcommand*{\C}[1]{\K{C}{#1}}
\newcommand*{\R}[1]{\K{R}{#1}}
\newcommand*{\Ha}[1]{\K{H}{#1}}
\newcommand*{\Z}[1]{\K{Z}{#1}}
\newcommand*{\Oc}[1]{\K{O}{#1}}
\newcommand*{\KP}[2]{\ensuremath{\mathbb{{#1}P}^{#2}}}
\newcommand*{\CP}[1]{\ensuremath{\KP{C}{#1}}}
\newcommand*{\HP}[1]{\ensuremath{\KP{H}{#1}}}
\newcommand*{\OP}[1]{\ensuremath{\KP{O}{#1}}}
\newcommand*{\RP}[1]{\ensuremath{\KP{R}{#1}}}
\newcommand*{\Cohom}[2]{\ensuremath{H^{#1}\!\left(#2\right)}}
\newcommand*{\cut}[1]{\ensuremath{#1^{\perp}}}
\newcommand*{\Gr}[2]{\ensuremath{\operatorname{Gr}_{\mathbb{R}}\!\left({#1},{#2}\right)}}
\newcommand*{\pr}[1]{\ensuremath{\left({#1}\right)}}
\newcommand*{\geod}[1]{\ensuremath{\operatorname{Geod}\!\pr{#1}}}
\newcommand*{\vol}[1]{\ensuremath{\operatorname{Vol} #1}}
\newcommand*{\area}[1]{\ensuremath{\operatorname{Area} #1}}
\newcommand*{\inj}[2][]{\ensuremath{\operatorname{inj}^{#1}\! #2}}
\newcommand*{\GL}[1]{\ensuremath{\operatorname{GL}\!\pr{#1}}}
\newcommand*{\ev}[3][.]%
{%
\ifnum\pdf@strcmp{#1}{.}=\z@ 
\ensuremath{\operatorname{ev}_{#2}}%
\else%
\ensuremath{\operatorname{ev}#3_{#1}\of{#2}}%
\fi%
}
\newcommand*{\pd}[3][1]
{
\frac{%
\partial%
\ifnum\pdf@strcmp{#1}{1}=0\else^{#1}\fi%
#2}%
{%
\partial
#3
\ifnum\pdf@strcmp{#1}{1}=0\else^{#1}\fi%
}%
}
\newcommand*{\defeq}{\mathrel{\vcenter{\baselineskip0.5ex \lineskiplimit0pt
                     \hbox{\scriptsize.}\hbox{\scriptsize.}}}%
                     =}
\newcommand*{\scalar}[1]{\ensuremath{\operatorname{scal}_{#1}}}
\newcommand*{\Ricci}[1]{\ensuremath{\operatorname{Ric}\of{#1}}}
\newtheorem{theorem}{Theorem}
\newtheorem{corollary}{Corollary}
\newtheorem{lemma}{Lemma}
\newtheorem{proposition}{Proposition}
\theoremstyle{remark}
\newtheorem{remark}{Remark}
\newcommand*{\drawpoint}[4][.]{
	\filldraw[gray!50!white] ({#3},{#4}) circle (1pt)
\ifnum\pdf@strcmp{#1}{.}=\z@ 
\else
	node[black,opacity=1,#2] {\mathlabel{#1}} 
\fi
;
}
\newcommand*{\curvecolour}{gray!50}
\newcommand*{\mediumcurvecolour}{gray!30}
\newcommand*{\lightcurvecolour}{gray!20}
\newcommand*{\verylightcurvecolour}{gray!10}
\definecolor{example-color}{RGB}{255,255,255}
\definecolor{example-border-color}{RGB}{200,200,200}
\def\@tocline#1#2#3#4#5#6#7{\relax
  \ifnum #1>\c@tocdepth 
  \else
    \par \addpenalty\@secpenalty\addvspace{#2}%
    \begingroup \hyphenpenalty\@M
    \@ifempty{#4}{%
      \@tempdima\csname r@tocindent\number#1\endcsname\relax
    }{%
      \@tempdima#4\relax
    }%
    \parindent\z@ \leftskip#3\relax \advance\leftskip\@tempdima\relax
    #5\leavevmode\hskip-\@tempdima #6\nobreak\relax
    ,~#7\par
    \endgroup
  \fi}
    \def\rulecolor#1#{\CT@arc{#1}}
    \def\CT@arc#1#2{%
      \ifdim\baselineskip=\z@\noalign\fi
      {\gdef\CT@arc@{\color#1{#2}}}}
    \let\CT@arc@\relax
\begin{document}
\maketitle
\begin{center}
\begin{varwidth}{\textwidth}
\tableofcontents
\end{varwidth}
\end{center}
\section{Definition}
In 1883, over a region of more than \(\frac{1}{3}\) of the Earth's surface, people could hear the explosion of the volcano Krakatau.
Citizens of Bogota, Columbia heard the explosion 7 times, as the sound focused and defocused at two antipodal points.
All of the geodesics leaving any point of a sphere minimize distance until they reach ``all the way across'' the sphere, where they simultaneously collide; see \cite{Arnold:2001} for applications.
A point \(p\) of a compact metric space is a \emph{Blaschke point} if every geodesic (i.e. locally shortest path) leaving \(p\) and of length less than the diameter is the unique shortest path between any of its points.
A compact connected metric space is \emph{Blaschke} if its every point is Blaschke.
\section{Examples}
Clearly the round sphere is Blaschke.
Each geodesic of \(\CP{2}\) lies inside a unique totally geodesic complex projective line \(\CP{1} \subset \CP{2}\).
Our geodesic's competitors for minimizing distance also lie in the same \(\CP{1}\), so our geodesic stops minimizing just when it reaches the antipode in \(\CP{1}\) of its starting point, a maximal distance.
Therefore \(\CP{2}\) is Blaschke, and likewise \(\CP{n}\), \(\HP{n}\) and \(\OP{2}\) are Blaschke.
Picture \(\RP{2}\): each
geodesic from the north pole travels to the equator, where it disappears and reappears on the opposite side of the equator, heading north.
Each geodesic minimizes distance up to the equator, which is as far as you can go from the north pole.
For the same reason, \(\RP{n}\) is Blaschke.
The Blaschke conjecture (as understood today) is the conjecture that the only Blaschke manifolds are \(\RP{n}, S^n, \CP{n}, \HP{n}\) and \(\OP{2}\) (i.e. the compact rank one symmetric spaces) with their standard metrics, up to rescaling each metric by a positive constant.

Imagine that you can see any star in the night sky, as its light reaches you along a geodesic.
But suppose that if that geodesic travels further than the diameter of your universe the light becomes too dim and so invisible.
If your universe is \(S^3\), then you can see any star as a single point of light in the sky, unless the star sits at your antipode, when you see the star as if it were a sphere of fire in the sky, surrounding you in all directions.
If your universe is \(\RP{3}\), you see each star as single point of light, unless it lies on your cut locus \(\RP{2}\), when you see the star as if it were two points of light, in opposite directions in the night sky.
If your universe is \(\CP{2}\), you see each star as a single point of light, unless it lies on your cut locus \(\CP{1}\), i.e. lies at the antipode of a round \(\CP{1}\) through your location, when you see the star as if it were a ring of fire circling you: a fiber of the Hopf fibration.
The same phenomena arise in all Blaschke manifolds, as we will see, and provide the only tool discovered so far to understand their topology.

Picture a star at a point \(p_0 \in \CP{2}\) exploding into a burst of light; the burst evolves as the sphere of radius \(t\) about that point in the Riemannian metric.
The sphere starts off its life as a small, nearly Euclidean, 3-dimensional sphere.
As the sphere evolves, the circles of the Hopf fibration (i.e. its intersections with complex projective lines through \(p_0\)) at first grow, and then later shrink, while in perpendicular directions the sphere grows steadily.
The sphere collapses onto a Euclidean 2-sphere, the cut locus of \(p_0\), consisting of the points at maximal distance from \(p_0\), as each circle of the Hopf fibration contracts to a point.
Similar phenomena occur in the other projective spaces, and in the Blaschke manifolds.

A flat torus
\[
\begin{tikzpicture}[scale=.3]
\fill[thick,gray!20,draw=gray] (0,0) -- (2,0) -- (3,4) -- (1,4) -- cycle;
\draw[thick,gray,-stealth] (0,0) -- (1,0);
\draw[thick,gray,-stealth] (1,4) -- (2,4);
\draw[thick,gray,-latex] (0,0) -- (.5,2);
\draw[thick,gray,-latex] (2,0) -- (2.5,2);
\end{tikzpicture}
\]
has diameter
\[
\begin{tikzpicture}[scale=.3]
\fill[thick,gray!20,draw=gray] (0,0) -- (2,0) -- (3,4) -- (1,4) -- cycle;
\draw[thick,gray,-stealth] (0,0) -- (1,0);
\draw[thick,gray,-stealth] (1,4) -- (2,4);
\draw[thick,gray,-latex] (0,0) -- (.5,2);
\draw[thick,gray,-latex] (2,0) -- (2.5,2);
\draw[very thick,black] (0,0) -- (1.5,2);
\fill[black] (1.5,2) circle (5pt);
\fill[black] (0,0) circle (5pt);
\end{tikzpicture}
\]
and shortest minimal geodesic
\[
\begin{tikzpicture}[scale=.3]
\fill[thick,gray!20,draw=gray] (0,0) -- (2,0) -- (3,4) -- (1,4) -- cycle;
\draw[thick,gray,-stealth] (1,4) -- (2,4);
\draw[thick,gray,-latex] (0,0) -- (.5,2);
\draw[thick,gray,-latex] (2,0) -- (2.5,2);
\draw[very thick,black] (0,0) -- (1,0);
\fill[black] (1,0) circle (5pt);
\fill[black] (0,0) circle (5pt);
\end{tikzpicture}
\]
from which it is easy to see that no flat torus of any dimension is Blaschke.

\section{What we know}
As we will see below, any Blaschke manifold has the integral cohomology ring of a unique compact rank 1 symmetric space, called its \emph{model} \cite{Bott:1954,Samelson:1963}.
The Blaschke conjecture is the claim that each Blaschke manifold is isometric to its model, up to constant metric rescaling. 
Today we know that, up to constant metric rescaling, every Blaschke manifold \(M\) is \emph{adjective} to its model:\label{table:know}
\begin{center}
\renewcommand{\arraystretch}{1.3}
\tabcolsep=0.08cm
\begin{tabular}{@{}>{\(}l<{\)}ll@{}}
\toprule
\text{model} & adjective & proof \\ \midrule
\RP{n} & isometric & \cite{Besse:1978,Green:1963,Weinstein:1974,Yang:1980} \\
S^n & isometric & \cite{Besse:1978,Green:1963,Weinstein:1974,Yang:1980} \\
\CP{2} & diffeomorphic & \cite{McKay:2005} \\
\HP{2} & diffeomorphic &  \cite{Kramer/Stolz:2007,Reznikov:1994} \\
\OP{2} & diffeomorphic &  \cite{Kramer/Stolz:2007,Reznikov:1994} \\
\HP{n} & homotopy equivalent & \cite{Sato:1985} \\
\bottomrule
\end{tabular}
\end{center}
As we can see in the table, the Blaschke conjecture is solved just for homology spheres and homology real projective spaces.
The proofs of my own paper \cite{McKay:2004} and of Yang's \cite{Yang:1990,Yang:1993} are flawed; see the section~\ref{section:sphere.fibrations} below for more information.
These flaws have altered the above table from the table that appeared in the published version of the paper you are currently reading.

\section{Equivalent conditions}
Each of the following is a necessary and sufficient condition that a connected compact Riemannian manifold \(M\) be Blaschke; see below and \cite{Besse:1978} chapter 5.
\begin{enumerate}
\item The diameter of \(M\) equals the injectivity radius of \(M\).
\item The distance of any point to its cut locus is the same for all points.
\item The cut locus of any point is a metric sphere, and the radius of that sphere is the same for all points.
\item All geodesics are simply closed loops and all cut loci are metric spheres.
\item If \(p\) and \(q\) are points of \(M\) and \(q\) lies in the cut locus of \(p\), then the unit tangent vectors to minimal geodesics heading from \(p\) to \(q\) form a great sphere in the unit sphere in \(T_p M\).
\item If \(D\) is the diameter of \(M\) then the exponential map \(\exp \colon T_p M \to M\) at any point \(p \in M\) is a diffeomorphism on the open ball of radius \(D\) and a fiber bundle mapping on the sphere of radius \(D\), with fibers great spheres.
\end{enumerate}

\section{History}
In the first edition of \cite{Blaschke:1921} (\textsection{86} p. 155 question 2), Blaschke  conjectured that any surface in \(\R{3}\) whose every point has a unique conjugate point is isometric to a round sphere.
In the second edition, Reidemeister gave a proof in an appendix, trying to show that a surface with the structure of a Blaschke metric space is a Desargues projective plane \cite{Salzmann:1995}. 
The third edition exposed an error in that proof; see \cite{Busemann:1957} for a simple construction of non-Desargues projective planes with metrics so that the projective lines are geodesics.
Green \cite{Green:1963} proved Blaschke's conjecture using elementary classical surface geometry.
Besse \cite{Besse:1978} expanded the Blaschke conjecture to the one stated above, and (as will see) Berger, Kazdan, Weinstein and Yang proved the conjecture for homology spheres and homology real projective spaces.
Subsequent authors have only considered the diffeomorphism types of Blaschke manifolds (as in our table), discovering nothing about the metric geometry, about which little is known.

\section{Cut locus geometry}

\begin{proposition}\label{proposition:cut}
Suppose that \(m_0 \in M\) is a point in a Blaschke manifold and \(\cut{m_0}\) is the cut locus of \(m_0\).
Then at every cut point \(c \in \cut{m_0}\), every tangent vector to \(M\) is uniquely expressed as a sum of a tangent vector to the cut locus and a normal vector to the cut locus.
Each normal vector to the cut locus is the tangent vector to a minimal geodesic from \(m_0\).
\end{proposition}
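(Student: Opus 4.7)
My plan is to prove the proposition in two halves. The orthogonal direct sum decomposition $T_c M = T_c \cut{m_0} \oplus N_c \cut{m_0}$ is automatic once we know that \(\cut{m_0}\) is a smooth submanifold near \(c\), and this is supplied by equivalent condition (6): the exponential map at \(m_0\), restricted to the sphere \(S_D \subset T_{m_0}M\) of radius \(D\), is a fiber bundle over \(\cut{m_0}\) with great-sphere fibers of dimension \(n-1-\dim \cut{m_0}\). The real content is therefore the identification of \(N_c \cut{m_0}\) with the tangent vectors at \(c\) of minimal geodesics from \(m_0\), and that is what I would concentrate on.

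The first step would be to show that the endpoint tangent \(\gamma'(D) \in T_c M\) of every unit-speed minimal geodesic \(\gamma \colon [0,D]\to M\) from \(m_0\) to \(c\) is perpendicular to \(T_c \cut{m_0}\). I would take any smooth curve \(s \mapsto c(s)\) in \(\cut{m_0}\) with \(c(0)=c\), locally lift it under the fiber bundle above to \(s \mapsto v(s) \in S_D\), and set \(\gamma_s(t) \defeq \exp_{m_0}\of{tv(s)/D}\). Each \(\gamma_s\) is a unit-speed minimal geodesic of length \(D\) from \(m_0\) to \(c(s)\), so the first variation formula applies: because the starting point is fixed at \(m_0\) and the length is constant, it collapses to \(0 = \langle \gamma'(D), c'(0) \rangle\), and letting \(c(s)\) sweep out \(T_c \cut{m_0}\) yields the desired orthogonality.

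The second step would exploit the symmetry \(m_0 \leftrightarrow c\). Since \(c\) lies in \(\cut{m_0}\) at the Blaschke diameter, \(m_0\) lies in \(\cut{c}\) too, so equivalent condition (5) applied at \(c\) produces a great sphere \(S_c\) in the unit sphere of \(T_c M\): its points are the initial tangent vectors at \(c\) of the minimal geodesics from \(c\) back to \(m_0\). Reversing parameter (and noting that great spheres are invariant under negation), \(S_c\) is precisely the set of endpoint tangents \(\gamma'(D)\) as \(\gamma\) ranges over unit-speed minimal geodesics from \(m_0\) to \(c\). By the first step, \(S_c\) is contained in the unit sphere of \(N_c \cut{m_0}\).

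A dimension count then closes the argument. The great sphere \(S_c\) has the same dimension as the bundle fibers, namely \(n-1-\dim\cut{m_0}\), and the unit sphere of \(N_c \cut{m_0}\) is a sphere of the same dimension. A great sphere is the unit sphere of a linear subspace \(V \subset T_c M\) of dimension \(n-\dim\cut{m_0}\); the inclusion \(V \subset N_c \cut{m_0}\) between two subspaces of equal dimension forces equality, so every unit normal vector at \(c\) is realized as some \(\gamma'(D)\). The principal obstacle I foresee is whether condition (6) really delivers smoothness of \(\cut{m_0}\) cleanly enough to make the first-variation step rigorous; if it does not, one can instead define \(T_c \cut{m_0}\) as the image of \(d\exp_{m_0}\) on directions tangent to the bundle fiber and run the same argument intrinsically in \(T_c M\).
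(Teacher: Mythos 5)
Your proposal is circular in the context of this paper's development. Conditions (5) and (6) from the list of equivalent conditions --- that the tangents of minimal geodesics between a point and its cut point form a great sphere, and that \(\exp_{m_0}\) restricted to the sphere of radius \(D\) is a fiber bundle over \(\cut{m_0}\) with great-sphere fibers --- are not available as inputs here: they are consequences of this very proposition. The later section on conjugate locus geometry derives the great-sphere fibration by explicitly invoking this proposition (``the geodesics \ldots{} have tangents consisting of the normal space to the conjugate locus metric sphere by proposition~\ref{proposition:cut}''), and the corresponding results in Besse's chapter 5 are likewise built on statements of exactly this kind. Once you grant that \(\cut{m_0}\) is a smooth submanifold and that the minimal-geodesic directions at \(c\) sweep out a great sphere of complementary dimension, the proposition does reduce to orthogonality plus a dimension count, as you observe; but that grants most of what must be proved. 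In particular, the proposition has to be established without knowing in advance that the set of velocities at \(c\) of \(C^1\) curves in \(\cut{m_0}\) is a linear subspace --- the remark following the statement defines ``tangent to a set'' for an arbitrary subset precisely because this is not yet known. Even your first-variation step quietly uses the bundle structure, since it needs a smooth lift of a curve in \(\cut{m_0}\) to a smooth family of minimal geodesics.

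The paper instead argues from the bare definition of a Blaschke manifold. Its first step is the same orthogonality you prove, obtained by differentiating the distance to interior points \(m_{\varepsilon}=p(\varepsilon)\) of the geodesic along an arbitrary curve in the cut locus, which avoids needing any smooth variation through minimal geodesics. The second step, which your dimension count replaces, is constructive: for any \(u \in T_c M\) with \(u \cdot w < 0\), the curve \(\exp_c\of{tu}\) leaves the cut locus, so each of its points is joined to \(m_0\) by a unique minimal geodesic; the resulting surface of geodesics carries a vector field \(v\) tangent to the metric spheres about \(m_0\), whose value at \(c\) is tangent to \(\cut{m_0}\) and satisfies \(u = v + (u \cdot w)\,w\). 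This simultaneously produces the decomposition, shows that the tangent set of the cut locus is a vector space, and identifies the normal vectors with tangents of minimal geodesics. To repair your proof you would need either to reproduce such a construction or to give an independent proof of conditions (5) and (6) that does not pass through this proposition.
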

\begin{remark}
A vector \(v \in T_m M\) is \emph{tangent} to a set \(S \subset M\) if \(v\) is the velocity of a continuously differentiable curve, defined on an open interval, with image in \(S\). The vectors tangent to \(S\) at a point \(s \in S\) form the \emph{tangent space} \(T_s S\).
Reverse or reparameterize: \(T_s S \subset T_s M\) is closed under rescaling.
\end{remark}
\begin{proof}
First, we want to see that the tangent spaces of the cut locus \(\cut{m_0}\) are perpendicular to the minimal geodesics from \(m_0\).
Since \(\cut{m_0}\) is the set of points furthest from \(m_0\), every path heading ``away'' from \(\cut{m_0}\) moves closer to \(m_0\).
Take any minimal geodesic \(p(t)\) heading from \(m_0\) to a point \(c \in \cut{m_0}\); suppose that \(p(0)=m_0\).
If \(D\) is the diameter of \(M\), \(p(D)=c\).
Let \(w=p'(D)\).
\par\noindent%
\begin{center}
\begin{tikzpicture}
\draw[\curvecolour] ({cos(-30)},{sin(-30)}) arc (-30:30:1) node[above,black] {\mathlabel{\cut{m_0}}};
\draw[\curvecolour] (0,0) -- ({cos(-10)},{sin(-10)}) node[xshift=-14,yshift=-3,black]{\mathlabel{p}};
\draw[\curvecolour,-latex] ({cos(-10)},{sin(-10)}) -- ({2*cos(-10)},{2*sin(-10)}) node[above,black]{\mathlabel{w}};
\drawpoint[m_0]{left}{0}{0}
\drawpoint[c]{above right}{cos(-10)}{sin(-10)}
\end{tikzpicture}
\end{center}
\par\noindent%
For any vector \(v \in T_c M\), if \(w \cdot v < 0\) then travelling along any path with velocity \(v\) will move us closer to \(m_0\).
To see this, let \(m_{\varepsilon}=p(\varepsilon)\), so that the distance between \(m_{\varepsilon}\) and \(c\) is \(D-\varepsilon\).
\par\noindent%
\begin{center}
\begin{tikzpicture}
\draw[\curvecolour] ({cos(-30)},{sin(-30)}) arc (-30:30:1) node[above,black] {\mathlabel{\cut{m_0}}};
\draw[\curvecolour] (0,0) -- ({cos(-10)},{sin(-10)});
\draw[\curvecolour,-latex] ({cos(-10)},{sin(-10)}) -- ({2*cos(-10)},{2*sin(-10)}) node[above,black]{\mathlabel{w}};
\drawpoint[m_0]{left}{0}{0}
\drawpoint[c]{above right}{cos(-10)}{sin(-10)}
\drawpoint[m_{\varepsilon}]{below}{.5*cos(-10)}{.5*sin(-10)}
\end{tikzpicture}
\end{center}
\par\noindent%
For \(\varepsilon\) with \(0 < \varepsilon < D\), the gradient of distance from \(m_{\varepsilon}\) at \(c\) is \(w\), so moving along a path with velocity \(v\) moves us strictly closer to \(m_{\varepsilon}\), at a rate \(w \cdot v\) independent of \(\varepsilon\); let \(\varepsilon \to 0\).
Therefore if \(v\) is a tangent vector \(v \in T_c \cut{m_0}\) to the cut locus, then \(w \cdot v \ge 0\), and since \(\pm v \in T_c \cut{m_0}\), \(w \cdot v=0\).

Pick any vector \(u \in T_c M\) not proportional to \(w\) so that \(u \cdot w < 0\); in particular \(u\) is not tangent to \(\cut{m_0}\).
\par\noindent%
\begin{center}
\begin{tikzpicture}
\draw[\curvecolour] ({cos(30)},{sin(30)}) arc (30:-30:1);
\draw[-latex,\curvecolour] ({cos(-10)},{sin(-10)}) -- ({.6*cos(20)},{.6*sin(20)}) node[left,black]{\mathlabel{u}};
\drawpoint[m_0]{left}{0}{0}
\drawpoint[c]{right}{cos(-10)}{sin(-10)}
\end{tikzpicture}
\end{center}
\par\noindent%
Let \(q(t) \defeq \exp_c\of{tu}\).
\par\noindent%
\begin{center}
\begin{tikzpicture}
\draw[\curvecolour] ({cos(30)},{sin(30)}) arc (30:-30:1);
\draw[\curvecolour] ({.6*cos(20)},{.6*sin(20)}) -- ({cos(-10)},{sin(-10)});
\drawpoint[m_0]{left}{0}{0}
\drawpoint[c]{right}{cos(-10)}{sin(-10)}
\drawpoint[q(t)]{above}{.6*cos(20)}{.6*sin(20)}
\end{tikzpicture}
\end{center}
\par\noindent%
Because \(u \cdot w < 0\), the distance to \(m_0\) along \(q(t)\) decreases for small \(t\), and so \(q(t)\) stays away from the cut locus \(m_0^{\perp}\) for small \(t>0\).
We travel along the unique minimal geodesic from \(m_0\) out to \(q(t)\); as we vary \(t\), we spread out a family of minimal geodesics, forming a surface in \(M\).
\par\noindent%
\begin{center}
\begin{tikzpicture}
\draw[\curvecolour] ({cos(30)},{sin(30)}) arc (30:-30:1);
\foreach \i in {-12,-8,...,12}{
	\draw[\curvecolour,very thin] (0,0) -- ({cos(\i)},{sin(\i)});
}
\draw[\curvecolour] ({.6*cos(20)},{.6*sin(20)}) -- ({cos(-10)},{sin(-10)});
\drawpoint[m_0]{left}{0}{0}
\drawpoint{}{.6*cos(20)}{.6*sin(20)}
\drawpoint[c]{below right}{cos(-10)}{sin(-10)}
\end{tikzpicture}
\end{center}
Along that surface, we let \(w\) be the unit velocity of the geodesic rays, i.e. \(w=\partial_s\), with \(s\) the arclength along each ray measured from \(m_0\).
Let \(v\) be the projection of \(\partial_t\) to \(w^{\perp}\).

Let's worry about how smooth this surface is along the edges \(s=D\) and \(t=0\).
Construct the same surface, but starting at \(m_{\varepsilon}\) instead of \(m_0\).
This surface is smooth out past \(t=0\) and \(s=D\), because we have no conjugate points, as the geodesic from \(m_{\varepsilon}\) to \(q(t)\) is strictly minimizing even past \(t=0\) and \(s=D\).
The field \(\partial_s\) is a unit vector field varying smoothly, and the field \(\partial_t\) is a Jacobi vector field along each geodesic, with given values at each end.
The geodesic at \(t=0\) is the one from \(m_{\varepsilon}\) through \(c\); as \(\varepsilon \to 0\) it approaches a limiting geodesic uniformly, the one through \(m_0\) reaching \(c\) with velocity \(w\).
The Jacobi vector field \(\partial_t\) along each geodesic remains a smooth Jacobi vector field as \(\varepsilon \to 0\), with the same initial and final conditions.
Such vector fields are uniformly bounded with all derivatives, by differentiating the Jacobi equation.
By the Arzel\'a--Ascoli theorem, the surface extends to a smooth surface with boundary at \(s=D\) and at \(t=0\) immersed smoothly into \(M\).

For \(s < D\), \(w\) is the gradient of distance from \(m_0\), so \(v\) is tangent to each metric sphere around \(m_0\).
Since there are no conjugate points before the cut locus, the metric sphere around \(m_0\) of radius \(s\) is a smooth hypersurface.
By continuity, the flow lines of \(v\) preserve distance from \(m_0\) at every point of \(M\), so \(v\) is tangent to \(\cut{m_0}\) at every point where our surface hits \(\cut{m_0}\), and so \(w\) is normal.
At \(c\), i.e. when \((s,t)=(D,0)\), \(u=\partial_t\) so \(v=u-(u \cdot w)w\).
In particular, every vector \(u\) is a linear combination \(u=v+a \, w\) of some \(v\) tangent to \(\cut{m_0}\) and some \(w\) normal to \(\cut{m_0}\).
\begin{center}
\begin{tikzpicture}
\draw[\curvecolour] ({cos(30)},{sin(30)}) arc (30:-30:1);
\draw[-latex,\curvecolour] ({cos(-10)},{sin(-10)}) -- ({cos(-10)-.7*sin(-10)},{sin(-10)+.7*cos(-10)}) node[black,right] {\mathlabel{v}};
\draw[\curvecolour,-latex] ({cos(-10)},{sin(-10)}) --  ({.6*cos(20)},{.6*sin(20)}) node[black,left] {\mathlabel{u}};
\draw[\curvecolour,-latex] ({cos(-10)},{sin(-10}) -- ({1.8*cos(-10)},{1.8*sin(-10}) node[black,below] {\mathlabel{w}};
\drawpoint[m_0]{left}{0}{0}
\drawpoint[c]{below right}{cos(-10)}{sin(-10)}
\end{tikzpicture}
\end{center}
By the same reasoning, every tangent vector to \(\cut{m_0}\) is a limit of a sequence of tangent vectors to smooth metric sphere hypersurfaces, and vice versa, so the tangent spaces to \(\cut{m_0}\) are vector spaces.
Vectors in \(T_c M\) which are normal to \(\cut{m_0}\), by our decomposition, are tangent vectors to minimal geodesics from \(m_0\).
\end{proof}

\begin{corollary}
In a Blaschke manifold, all geodesics are periodic and all of the same length, equal to twice the diameter.
\end{corollary}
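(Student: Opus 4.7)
The plan is to show that any unit-speed geodesic $p$ with $p(0)=m_0$ satisfies $p(2D)=m_0$ for $D$ the diameter, and then promote this closing-up to genuine $2D$-periodicity of the geodesic flow by applying the same argument at every point along $p$.

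First I would put $c \defeq p(D)$ and $w \defeq p'(D)$. Since $M$ is Blaschke, $p|_{[0,D]}$ is a minimal geodesic to a cut point $c \in \cut{m_0}$, and Proposition~\ref{proposition:cut} places $w$ in the normal space $N_c$ to $\cut{m_0}$ at $c$. The proposition exhibits $N_c$ as a linear direct summand of $T_c M$, so $-w \in N_c$ as well, and hence by the proposition $-w = \alpha'(D)$ for some unit-speed minimal geodesic $\alpha \colon [0,D] \to M$ from $m_0$ to $c$. The reversal $\sigma(s) \defeq \alpha(D-s)$ is a geodesic with $\sigma(0)=c$ and $\sigma'(0)=w=p'(D)$, so uniqueness of the geodesic ODE forces $p(D+s)=\alpha(D-s)$ on $[0,D]$, and in particular $p(2D) = \alpha(0) = m_0$.

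Next I would apply the same construction with each $p(s_0)$ replacing $m_0$ (legitimate because every point of $M$ is Blaschke), obtaining $p(s_0 + 2D) = p(s_0)$ for every $s_0 \in \mathbb{R}$; differentiating in $s_0$ then gives $p'(s_0 + 2D) = p'(s_0)$, so $p$ is genuinely $2D$-periodic. That $2D$ is the minimal period, hence the length of the closed orbit, follows because any smaller period $T$ would confine the orbit to the closed metric ball of radius $T/2$ about $m_0$, contradicting $d(m_0,c) = D$.

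The heart of the argument, and its only real obstacle, is the extension past the cut point. Without the combination, furnished by Proposition~\ref{proposition:cut}, of the vector space structure on $N_c$ and the realization of every element of $N_c$ as $\alpha'(D)$ for some minimal geodesic $\alpha$ from $m_0$, there is no way to identify the continuation of $p$ past $c$ with the reversal of a second minimal geodesic heading back to $m_0$. Once $-w$ is so realized, the rest of the argument reduces to the uniqueness of geodesics through a prescribed point with a prescribed velocity.
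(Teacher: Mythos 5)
Your proof is correct and follows the same route as the paper's: by Proposition~\ref{proposition:cut} the terminal velocity $p'(D)$ is normal to $\cut{m_0}$, hence so is $-p'(D)$, which is therefore the terminal velocity of a second minimal geodesic from $m_0$, and the continuation of $p$ past the cut point is the reversal of that geodesic. You merely supply the details the paper's one-sentence proof leaves implicit (closure at time $2D$, the velocity-matching obtained by running the argument at every basepoint $p(s_0)$ and differentiating, and minimality of the period $2D$), all of which are handled correctly.
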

\begin{proof}
If we follow a geodesic \(p(t)\) until it hits a cut point \(c=p(D)\), then the vector \(p'(D)\) is normal to the cut locus, and so \(-p'(D)\) is also normal to the cut locus, so \(-p'(D)\) is the velocity of a minimal geodesic from \(m_0\).
\end{proof}

If we switch the roles of \(c\) and \(m_0\) above, note that \(m_0 \in \cut{c}\).
Hence for any two points \(p, q \in M\) at distance equal to the diameter of \(M\), the geodesics between those two points have tangents forming a linear subspace in each of the tangent spaces \(T_p M\) and \(T_q M\).

Any compact connected Riemannian manifold has a geodesic loop (perhaps not smoothly periodic) in every homotopy class.
Any Blaschke manifold has all geodesics periodic of the same length, so the periodic geodesics lie in a connected family, i.e. there is at most one nontrivial homotopy class.
Turning the geodesic to point in the opposite direction, we see that this homotopy class is its own negative: \(\pi_1(M)=\Z{}/2\Z{}\) or \(\pi_1(M)=0\).

\section{Cohomology}
Take a Blaschke manifold \(M\) and a point \(m_0 \in M\) and let \(P\) be the space of rectifiable paths in \(M\) issuing from \(m_0\) and \(P_m \subset P\) the paths ending at a point \(m \in M\).
The energy function \(\gamma \mapsto \int \left|\dot\gamma\right|^2\) is a Morse function on \(P_m\), with critical points the geodesics from \(m_0\) to \(m\), so \(P_m\) is homotopy equivalent to a \(CW\)-complex, with cells added at each critical energy level, of dimension given by the index \cite{Reznikov:1985b,Reznikov:1994}.
Each geodesic leaving \(m\) is a union of geodesic loops together with a minimal geodesic ``end'', so that the index of each geodesic is determined by how many times it wraps around.
The end point map \(P \to M\) is a fibration with fibers \(P_m\). The space \(P\) is contractible: suck spaghetti into your mouth.
So the exact sequence in homotopy ensures \(\pi_i\pr{P_{m_0}}=\pi_{i+1}\pr{M}\) and yields a Gysin sequence \(\dots \to \Cohom{i}{P} \to \Cohom{i-k}{M} \to \Cohom{i+1}{M} \to \Cohom{i+1}{P} \to \dots\), where \(k\) is the index of any of the minimal periodic geodesics.
The maps \(\Cohom{i-k}{M} \to \Cohom{i+1}{M}\) are cup product maps with an element \(h \in \Cohom{k}{M}\): \cite{McCleary:2001} p. 143 example 5.C. 
Contractibility of \(P\) makes the exact sequence a collection of isomorphisms, and computes the low degree cohomology of \(M\), showing that it is generated by \(h\), the ``hyperplane class''. 
Duality and connectivity of \(M\) computes the rest of the cohomology groups of \(M\).  
The cohomology ring structure is a consequence purely of  the cohomology group dimensions, using foundational results on cohomology operations \cite{Adams:1960,Adem:1953,Bott:1954,Milnor:1958,Samelson:1963}.

\section{Volume and symplectic volume}\label{section:Volume.and.symplectic}
To find the volume of a Blaschke manifold \(M\) of given diameter, look at the circle bundle \(S^1 \to UTM \to \geod{M}\) from the unit tangent bundle to the space of oriented geodesics.
For any principal circle bundle, or associated complex line bundle, with connection 1-form \(\xi\), on any manifold, the first Chern class is the cohomology class of \(d\xi/2 \pi\).
So if we rescale the metric so that the fibers of \(UTM \to \geod{M}\) have length \(2 \pi\), then the contact form is the connection 1-form, which has curvature the symplectic form on \(\geod{M}\).
So the cohomology class of the symplectic form is the first Chern class, i.e. the Euler class, of the circle bundle.
Fubini's theorem relates the symplectic volume of \(\geod{M}\) to the volume of \(UTM\) and relates that to the volume of \(M\).
For low dimensional Blaschke manifolds and homology spheres, the Gysin sequences of the two fibrations \(M \leftarrow UTM \to \geod{M}\) give the cohomology ring on \(\geod{M}\) \cite{Besse:1978} 2.C, \cite{Yang:1982}.
More generally, Morse theory on the loop space computes the equivariant loop space cohomology which computes the cohomology ring on \(\geod{M}\) and ensures that it matches the model cohomology ring \cite{Reznikov:1985b,Reznikov:1994}, giving the symplectic volume, so \(M\) has the same volume as the model.

\section{Conjugate locus geometry}

Suppose that \(M\) is a Blaschke manifold and \(m_0 \in M\) a point.
Each cut point of \(m_0\) lies on a periodic geodesic made of two minimal geodesics.
By minimality, there are no conjugate points along either of those geodesics.
So a conjugate point doesn't arise except perhaps either at the cut point, or as the geodesic returns to where it started.
As the geodesic returns, it acheives a conjugate point because all of the geodesics simultaneously return to  where they started, all at length equal to twice the diameter. 
So the conjugate points are all at either diameter or twice diameter.

At each cut point of \(m_0\), the normal space to \(\cut{m_0}\) consist precisely of tangents to minimal geodesics to \(m_0\).
If that normal space has dimension 2 or more, i.e. if the cut locus tangent spaces are not hyperplanes, then each unit speed minimal geodesic deforms through a family of unit speed minimal geodesics with fixed end points, i.e. the cut point is a conjugate point.

Conversely, if the normal space has dimension 1, there is no such deformation, so no conjugate point.
Therefore there is no conjugate point on any nearby geodesic.
So all nearby geodesic loops are minimal length loops.
The set of minimal length loops is closed in the space of geodesics.
So all of the geodesics reach a conjugate point just when they loop.
If some geodesic reaches its first conjugate point at twice diameter, then they all do.
Therefore if some geodesic reaches its first conjugate point at diameter, then they all do.

Either way, the exponential map reaches the conjugate locus along a sphere in each tangent space.
Inside that sphere, the exponential map is either a diffeomorphism to the complement of the cut locus, or wraps twice around \(M\).
Since all conjugate points occur at a fixed distance (either \(D\) or \(2D\) from initial point, if \(D\) is the diameter), conjugate points cannot ``scatter'' as we perturb a geodesic, i.e. the index of a conjugate point is constant in any family of geodesics.

Rescale \(M\) so that conjugate points arise along geodesics of length 1.
So \[\exp_{m_0} \colon T_{m_0} M \to M\] has constant rank, say \(k\), along the unit sphere \(S=S^{n-1} \subset T_{m_0} M\).
By constant rank, the exponential map takes \(S\) into an immersed submanifold \(\bar{S} \subset M\) of dimension \(k-1\) (perhaps not embedded), a cover of the conjugate locus.
\begin{center}
\begin{tikzpicture}
\draw[\verylightcurvecolour] (1,0) arc[start angle=0, end angle=180, x radius=1cm,y radius=.3cm];
\draw[\lightcurvecolour] (1,0) arc[start angle=0, end angle=-180, x radius=1cm,y radius=.3cm];
\draw[\curvecolour] (0,0) circle (1cm);
\node[left] at (-1,0) {\mathlabel{S}};
\draw[\curvecolour,-latex] (1.5,0) arc (120:60:1.5cm);
\draw[\curvecolour,-latex] (1.5,0) arc (120:90:1.5cm) node[black,above] {\mathlabel{\exp_{m_0}}};
\foreach \i in {45,50,...,70}{
	\draw[\mediumcurvecolour] (.5,{-.5+0.01*\i}) arc[start angle=10, end angle=60, x radius=1,y radius={0.01*\i}];
}
\draw[\curvecolour] (4,0) arc [start angle=0, end angle=80, x radius=.5cm,y radius=1cm];
\draw[\curvecolour] (4,0) arc [start angle=-180, end angle=-80, x radius=.5cm,y radius=1cm] node[black,right] {\mathlabel{\bar{S}}};
\end{tikzpicture}
\end{center}
All of the geodesics from \(m_0\) reach the conjugate locus at the same time and perpendicularly by the equation of first variation.
So time 1 geodesic flow maps \(S\) to the unit normal bundle of \(\bar{S}\).
\begin{center}
\begin{tikzpicture}
\draw[\verylightcurvecolour] (1,0) arc[start angle=0, end angle=180, x radius=1cm,y radius=.3cm];
\draw[\lightcurvecolour] (1,0) arc[start angle=0, end angle=-180, x radius=1cm,y radius=.3cm];
\draw[\curvecolour] (0,0) circle (1cm);
\draw[\curvecolour,-latex] (1.5,0) arc (120:60:1.5cm);
\draw[\curvecolour,-latex] (1.5,0) arc (120:90:1.5cm) node[black,above] {\footnotesize{time 1 flow}};
\foreach \i in {45,50,...,70}{
	\draw[\mediumcurvecolour] (.5,{-.5+0.01*\i}) arc[start angle=10, end angle=60, x radius=1,y radius={0.01*\i}];
}
\draw[\curvecolour,double distance=4pt,line cap=round] (4,0) arc [start angle=0, end angle=80, x radius=.5cm,y radius=1cm];
\draw[\lightcurvecolour] (4,0) arc [start angle=0, end angle=80, x radius=.5cm,y radius=1cm];
\draw[\curvecolour,double distance=4pt,line cap=round] (4,0) arc [start angle=-180, end angle=-80, x radius=.5cm,y radius=1cm];
\draw[\lightcurvecolour] (4,0) arc [start angle=-180, end angle=-80, x radius=.5cm,y radius=1cm];
\end{tikzpicture}
\end{center}
The time 1 geodesic flow is a diffeomorphism lying ``above'' the exponential map.
The unit normal bundle of \(\bar{S}\) has the same dimension as \(S\).
By compactness of \(S\), time 1 flow takes \(S\) diffeomorphically to the unit normal bundle of \(\bar{S}\).
Either the conjugate locus is the cut locus or else conjugate points arise when a geodesic reaches a first period.
In either case, the geodesics (minimal or periodic) have tangents consisting of the normal space to the conjugate locus metric sphere by proposition~\vref{proposition:cut}.
The fibers of \(\exp_{m_0} \colon S \to M\) are great spheres.
The map \(S \to \bar{S}\) is a fiber bundle, fibering the unit sphere \(S \subset T_{m_0} M\) in any tangent space of \(M\), with great spheres as fibers.

\begin{theorem}\label{theorem:2.to.1}
Suppose that \(M\) is a Blaschke manifold of dimension \(n\).
The universal covering space \(M' \to M\) is either equal to \(M\) or a 2-1 covering of \(M\), and is a Blaschke manifold in the pullback metric.
The cut locus of any point of \(M'\) is a smooth embedded submanifold equal to the conjugate locus.
For a 2-1 covering, the cut locus and conjugate locus of any point of \(M'\) is a point.
Rescale \(M\) so that \(M'\) has unit diameter.
For any point \(m_0 \in M\), there is smooth fiber bundle mapping \(S \to \bar{S}\) of the unit sphere \(S \subset T_{m_0} M\), each of whose fibers is a great sphere, so that \(M'\) is homeomorphic to the quotient \(\bar{B}/\!\!\sim\) of the closed unit ball \(\bar{B} \subset T_{m_0} M\) by the equivalence \(x \sim y\) if either \(x=y\) or \(x, y \in S\) and \(x\) and \(y\) are mapped to the same point in \(\bar{S}\). 
\end{theorem}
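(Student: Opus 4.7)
The plan is to build $M'$ by combining the fiber bundle $\exp_{m_0}\colon S\to\bar{S}$ with the dichotomy for $\exp_{m_0}\colon B^o\to M$ established just above: either this map is a diffeomorphism onto $M\setminus\bar{S}$ (conjugate locus coincides with cut locus, diameter $D=1$), or it is a $2$-to-$1$ local diffeomorphism onto $M$ (conjugate locus at $2D$, diameter $D=1/2$, and $\bar{S}=\{m_0\}$ because every geodesic returns to $m_0$ at the common period $1$).

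In the diffeomorphism case I set $M'\defeq M$ and first identify $M\cong\bar{B}/\!\!\sim$: combining the bundle collapse $S\to\bar{S}$ with the diffeomorphism $B^o\to M\setminus\bar{S}$ yields a continuous bijection $\bar{B}/\!\!\sim\,\to M$ from a compact space to a Hausdorff one, hence a homeomorphism, and Proposition~\ref{proposition:cut} supplies the compatible smooth structure across $\bar{S}$ through the normal-tangential splitting at cut points. Under the assumption that the great-sphere fibers have positive dimension (the $S^0$-fiber alternative would force $\bar{S}$ to be a codimension-one hypersurface double-covered by $S$, putting $M$ topologically in the $\RP{n}$-type that is handled by the $2$-to-$1$ scenario below), $\bar{S}$ has codimension at least $2$, so the contractible $M\setminus\bar{S}\cong B^o$ forces $\pi_1(M)=1$ and $M'=M$ is its own universal cover.

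In the $2$-to-$1$ case I define $M'\defeq\bar{B}/\partial\bar{B}$, topologically $S^n$, and descend $\exp_{m_0}\colon\bar{B}\to M$ to a map $M'\to M$. Sheet counting shows it is $2$-to-$1$ and a local diffeomorphism at every point (including $[0]$ and $[\partial\bar{B}]$, where the radial local structure of $\exp$ and the time-$1$ flow description settle smoothness), so compactness upgrades it to a covering, and simple connectedness of $S^n$ for $n\ge 2$ identifies $M'$ as the universal cover. Every geodesic from $\tilde{m}_0$ in $M'$ closes up at time $1$, so the cut and conjugate loci of $\tilde{m}_0$ both reduce to the single point $[\partial\bar{B}]$. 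The pullback metric on $M'$ is Blaschke because equivalent condition~(6) of the Equivalent Conditions section holds at $\tilde{m}_0$ by the very construction of $M'$ and at all other points by deck-isometry.

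The main obstacle will be the smoothness of $\bar{B}/\!\!\sim$ across the collapsed sphere $\bar{S}$. Away from $\bar{S}$ it is immediate from the local-diffeomorphism property of $\exp_{m_0}$, but near a point of $\bar{S}$ one must assemble the normal-tangential splitting of Proposition~\ref{proposition:cut} with the great-sphere-bundle structure into a local trivialization identifying a tubular neighborhood of a fiber in $\bar{B}$ with a normal-disk bundle of $\bar{S}$, using the time-$1$ geodesic flow (earlier identified with the unit normal bundle of $\bar{S}$) as the gluing data.
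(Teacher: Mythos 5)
Your overall architecture is sound and close to the paper's: the same dichotomy (conjugate locus equal to the cut locus, or sitting at twice the cut distance) and the same identification of the simply connected model with \(\bar{B}/\!\!\sim\) via the exponential map. Your treatment of the 2-1 case is in fact more constructive than the paper's, which obtains the double cover abstractly from the cohomology computation (\(M\) is a cohomology \(\RP{n}\), so \(\pi_1 M = \mathbb{Z}/2\)) rather than building \(\bar{B}/\partial\bar{B}\) by hand. But there is a genuine gap in that case. You assert that ``every geodesic from \(\tilde{m}_0\) in \(M'\) closes up at time 1''; it does not --- it arrives at the \emph{other} point \([\partial\bar{B}]\) of the fiber over \(m_0\) and closes up only at time 2. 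More seriously, to conclude that the cut locus of a point of \(M'\) is a single point at distance equal to the diameter, and that \(M'\) is Blaschke, you must show that \emph{no shorter path} joins the two points of a fiber of \(M' \to M\), equivalently that every periodic geodesic of \(M\) is homotopically nontrivial. This is exactly the ingredient the paper supplies (``a path from one to another projects to a loop of nontrivial homology in \(M\)''): each periodic geodesic meets the cut-locus hypersurface of its starting point exactly once and transversally, so it has mod-2 intersection number 1 with that hypersurface and therefore represents the nonzero class in \(\pi_1(M)=\mathbb{Z}/2\); hence any path between the two sheets projects to a nontrivial loop, which must have length at least twice the injectivity radius, i.e.\ at least \(2D\). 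Without this, ``Blaschke at all other points by deck-isometry'' fails --- the deck group only permutes the two points of each fiber, and a priori a lifted geodesic through some other point could close up after one period, producing a short geodesic loop and an injectivity radius strictly less than the diameter.

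A second, smaller slip: in case (a) you exclude \(S^0\) fibers by claiming that alternative ``is handled by the 2-to-1 scenario,'' but your case (b) is defined by the conjugate locus lying at distance \(2D\), so a hypothetical ``conjugate locus equals cut locus with \(S^0\) fibers'' would not fall under it. The correct reason this subcase is vacuous is that in case (a) the sphere \(S\) is by definition the conjugate sphere, so \(\exp_{m_0}\) has rank strictly less than \(n\) along \(S\) and the great-sphere fibers automatically have positive dimension; that is what gives \(\operatorname{codim}\bar{S}\ge 2\) and hence \(\pi_1 M = 1\). Finally, you are right to single out the smooth structure on \(\bar{B}/\!\!\sim\) across \(\bar{S}\) as the delicate point; the paper itself only asserts it, so flagging it as remaining work is fair.
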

\begin{proof}
Suppose that the cut locus of \(M\) is not the conjugate locus.
As above, the conjugate locus of any point is just that point. 
All geodesics from a point become conjugate just when they return to that point.
The cut locus is half-way along each geodesic, so a hypersurface.
From our study of the cohomology, \(M\) is a cohomology real projective space, so its universal covering \(M'\), i.e. its set of oriented tangent planes, is a 2-1 covering, with a pullback metric with conjugate locus of each point exactly the antipodal point.
The distance between antipodal points on \(M'\) is exactly twice the diameter of \(M\), because a path from one to another projects to a loop of nontrivial homology in \(M\).
On the other hand, one easily arrives at a shorter path for nonantipodal points. 
Hence \(M'\) has diameter double that of \(M\), with cut locus of each point just its antipodal point.

So we can assume that \(M'=M\) has conjugate locus equal to cut locus.
Scale to have unit diameter.
The  exponential map on the closed unit ball in any tangent space of \(M\) identifies \(M=\bar{B}/\sim\) as topological spaces.
\end{proof}

\begin{corollary}
If the great sphere fiber bundles on unit spheres of two Blaschke manifolds are  isomorphic as topological fiber bundles then the Blaschke manifolds are homeomorphic.
\end{corollary}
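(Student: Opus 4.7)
The plan is to upgrade the identification \(M' \cong \bar{B}/\!\!\sim\) from Theorem \ref{theorem:2.to.1} to a diffeomorphism of two such quotients when the boundary fibrations agree, and then to handle the two-to-one covering case separately using data read off from the great sphere fibration itself.

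Fix base points \(m_i \in M_i\), pass to universal covers, and rescale so that each \(M_i'\) has unit diameter. Theorem \ref{theorem:2.to.1} identifies \(M_i' \cong \bar{B}_i/\!\!\sim_i\), where \(\bar{B}_i \subset T_{m_i} M_i\) is the closed unit ball and \(\sim_i\) is generated on the unit sphere \(S_i\) by the fibers of the great sphere bundle \(S_i \to \bar{S}_i\). A smooth fiber bundle isomorphism \(\Phi \colon (S_1 \to \bar{S}_1) \to (S_2 \to \bar{S}_2)\) is precisely an isomorphism of the two equivalence relations on the boundaries. I would pick any linear isometry \(L \colon T_{m_1} M_1 \to T_{m_2} M_2\) and extend \(\Phi\) to a diffeomorphism of closed balls \(\tilde{\Phi} \colon \bar{B}_1 \to \bar{B}_2\) by using \(L\) on a small ball around the origin, the radial cone \(r v \mapsto r \Phi(v)\) near the boundary, and a smooth bump function to interpolate between them on an intermediate annulus. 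Since \(\tilde{\Phi}\) is a diffeomorphism of closed balls whose boundary restriction \(\Phi\) preserves fibers, it descends to a bijection \(M_1' \to M_2'\) that is smooth on the complement of the cut locus (where the quotient maps are diffeomorphisms onto open balls) and smooth across the cut locus (where one reads off the smooth structure on each \(M_i'\) from the statement of Theorem \ref{theorem:2.to.1} and observes that \(\tilde\Phi\) is compatible with those structures by its very construction).

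The two cases of Theorem \ref{theorem:2.to.1} are detected by the great sphere fibration itself: the manifold is two-to-one covered exactly when \(\bar{S}\) is a point and every fiber is all of \(S\). Hence the isomorphism \(\Phi\) forces \(M_1\) and \(M_2\) into the same case. In the self-universal-cover case we are done; in the two-to-one case the universal covers \(M_i'\) are diffeomorphic spheres, both \(M_i\) are homology real projective spaces, and the rigidity result cited in the \(\RP{n}\) row of the table on page \pageref{table:know} gives \(M_i \cong \RP{n}\) and hence \(M_1 \cong M_2\). The main obstacle is the smoothness check in the middle paragraph: a pure radial cone extension of \(\Phi\) is not smooth at the origin, so the interpolation with the linear model \(L\) has to be arranged carefully, and smoothness of the induced map across the cut locus has to be extracted from Theorem \ref{theorem:2.to.1}'s description of the smooth structure on \(\bar{B}/\!\!\sim\) rather than verified by brute coordinate computation.
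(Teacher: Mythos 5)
The paper gives no proof of this corollary at all: it is stated as an immediate consequence of Theorem~\ref{theorem:2.to.1}, whose proof characterizes the smooth structure on \(\bar{B}/\!\!\sim\) by declaring the smooth functions to be those smooth on \(\bar{B}\) and constant on the boundary fibers. Your descent argument across the cut locus uses exactly that characterization and is fine. The genuine gap is the step you flag but do not close: producing a diffeomorphism \(\tilde\Phi \colon \bar{B}_1 \to \bar{B}_2\) that is fiber-preserving on the boundary. A diffeomorphism of \(S^{n-1}=\partial\bar{B}\) extends to a diffeomorphism of the closed ball if and only if it is smoothly isotopic to the restriction of a linear map (restrict any extension to concentric spheres and rescale for one direction; use the isotopy on a collar for the other), and \(\pi_0\operatorname{Diff}\of{S^{n-1}}\) is nontrivial in general---it is the source of exotic twisted spheres. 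Nothing in the hypothesis ``smoothly isomorphic as fiber bundles'' places \(\Phi\) in the isotopy class of a linear map, a bump-function interpolation between \(L\) and the radial cone of \(\Phi\) is not a diffeomorphism, and replacing the cone at the origin by a fiberwise cone over the cut locus meets the same obstruction at the zero section of the normal bundle. So either the hypothesis must be read as isomorphism compatible with the linear structure through the Grassmannian---which is what every application in the section on sphere fibrations actually supplies, via explicit linear or complex-linear normalizations or deformations through great sphere fibrations---or an additional argument is needed; your proposal provides neither.

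Second, your mechanism for detecting the two cases of Theorem~\ref{theorem:2.to.1} fails. The fibration in that theorem lies over (a cover of) the \emph{conjugate} locus, which is a single point both for a Blaschke manifold modelled on a sphere and for one that is 2--1 covered: the round \(S^n\) and the round \(\RP{n}\) both have conjugate-locus fibration \(S \to \text{point}\) with fiber all of \(S\), yet they are not diffeomorphic. So ``\(\bar{S}\) is a point and every fiber is all of \(S\)'' does not single out the 2--1 case, and an isomorphism of these fibrations does not force \(M_1\) and \(M_2\) into the same case. The corollary must instead be read with the fibration over the \emph{cut} locus, as in equivalent condition (6) and the later section on sphere fibrations; under that reading \(M\) itself, not merely \(M'\), is \(\bar{B}/\!\!\sim\), the two cases are distinguished by the fiber dimension, and the entire detour through universal covers---including the appeal to the isometric rigidity of Blaschke \(\RP{n}\)'s, which is far more than the diffeomorphism statement already supplied by Corollary~\ref{corollary:sphere.and.rp}---is unnecessary.
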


\begin{corollary}\label{corollary:sphere.and.rp}
Suppose that \(M\) is a Blaschke manifold and that the cut locus of some (hence any) point is a hypersurface.
Then \(M\) is diffeomorphic to real projective space and its cut locus diffeomorphic to a linear real projective hypersurface.
\end{corollary}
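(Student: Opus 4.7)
The plan is to apply Theorem~\ref{theorem:2.to.1} and read off each case of the corollary from the great sphere fibration $S\to\bar S$ that the theorem produces on the unit sphere $S\subset T_{m_0}M$, paying attention to the dimensions of the fibers.

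First I would handle case (2). If the cut locus of $m_0$ is discrete, then it cannot be a hypersurface, so the proof of Theorem~\ref{theorem:2.to.1} forces the cut locus to coincide with the conjugate locus and hence $M=M'$. Consequently $\bar S$ is $0$-dimensional. The great sphere fibers of $S\to\bar S$ then have dimension $(n-1)-0=n-1$, but the only $(n-1)$-dimensional great sphere in $S^{n-1}$ is $S^{n-1}$ itself, so $\bar S$ is a single point and the identification $\sim$ collapses the entire boundary of $\bar B$ to a point. Hence $M\cong\bar B/\partial\bar B\cong S^n$ smoothly.

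For case (1), the cut locus is a hypersurface. If the cut locus equals the conjugate locus, then $M=M'$ and the fibration $S\to\bar S$ has fibers of dimension $(n-1)-(n-1)=0$: these are great $S^0$'s in $S^{n-1}$, i.e., antipodal pairs $\{v,-v\}$. Thus $\sim$ is antipodal identification on $\partial\bar B$ and $M\cong\bar B/\{v\sim -v\}\cong\RP{n}$. If instead the cut locus differs from the conjugate locus, Theorem~\ref{theorem:2.to.1} gives $M'\cong S^n$, and I would analyze $\exp_{m_0}\colon T_{m_0}M\to M$ on the closed ball of radius $D=\diameter{M}$ directly: on the open ball it is a diffeomorphism onto $M\setminus\cut{m_0}$ by the Blaschke property, and on the boundary $S^{n-1}_D$ it is $2$-to-$1$ onto the cut locus, since the two minimal geodesics from $m_0$ to any cut point $c$ lift in $M'$ to minimal geodesics from $\tilde m_0$ to the two antipodal lifts of $c$.

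The main obstacle is showing that these two initial directions are antipodes $\{v,-v\}$ in $T_{m_0}M$. The key is that the deck transformation $\sigma$ of $M'\to M$ sends each point $p\in M'$ to its unique cut point: any two lifts of a point of $M$ are at distance equal to $\diameter{M'}=2\diameter{M}$, because any infimising path projects to a loop in $M$ of nontrivial homotopy class, whose minimum length is exactly $2\diameter{M}$ by the corollary on periodic geodesics. Hence $\sigma(p)$ is the antipode of $p$ in $M'\cong S^n$, and the two minimal geodesics from $\tilde m_0$ to $p$ and $\sigma(p)$ arise as opposite arcs of a single closed geodesic through $\tilde m_0$, with reverse initial tangents. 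Once this is established the fibres of $\exp_{m_0}$ on $S^{n-1}_D$ are antipodal pairs and we conclude $M\cong\RP{n}$ exactly as in the first subcase.
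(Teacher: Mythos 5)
Your proof is correct and rests on the same foundation as the paper's: apply Theorem~\ref{theorem:2.to.1} and read off the diffeomorphism type from the dimensions of the great sphere fibers. Case (2) coincides with the paper's one-line argument. In case (1) you take a longer route than necessary, in two respects. First, your subcase ``the cut locus equals the conjugate locus and is a hypersurface'' is vacuous: at a conjugate point the exponential map drops rank on the distance sphere, so a conjugate cut locus has dimension strictly less than \(n-1\); equivalently, that subcase would make \(M\) simultaneously simply connected (being \(M'\)) and diffeomorphic to \(\RP{n}\). A hypersurface cut locus always forces the \(2\)-\(1\) covering. Second, the deck-transformation argument you use to show that the two initial directions to a cut point are opposite, while correct, is not needed: the fibers of \(\exp_{m_0}\) restricted to the distance sphere of radius \(D\), mapping onto \(\cut{m_0}\), are \emph{great} spheres (this is established in the conjugate locus geometry section via Proposition~\ref{proposition:cut}), and a great sphere of dimension zero is by definition the intersection of a line through the origin with the unit sphere, hence an antipodal pair \(\{v,-v\}\). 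That observation, together with the fact that the cut point sits halfway along each periodic geodesic, is the paper's entire proof of (1). Your covering-space detour does have the merit of making explicit why the boundary identification is exactly \(2\)-to-\(1\) and of exhibiting the deck transformation as the antipodal involution of \(M'\cong S^n\), which anticipates the later proposition on quotienting by an involution.
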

\begin{proof}
The cut point occurs halfway along each periodic geodesic; the sphere bundle identifies opposite points, and only those because the fiber of the sphere fibration has dimension zero.
There is no conjugate point near any cut point, so the exponential map is a local diffeomorphism near each cut point.
In particular, the exponential map is a surjective local diffeomorphism \(\bar{B} \to M\) from the closed ball \(\bar{B}\) of radius equal to the diameter of \(M\).
The smooth functions on \(M\) are identified with the smooth functions on that ball invariant under the antipodal map.
Any two such Blaschke manifolds \(M\) have the same smooth functions and so are diffeomorphic.
Note that the cut locus is also diffeomorphic to real projective space.
\end{proof}

\section{Cut locus topology}

Let \(m^{\perp}\) be the cut locus of each point \(m \in M\).
As above, \(M-m^{\perp}\) is diffeomorphic to an open ball, so its cohomology is trivial.
For two distinct points \(m_0, m_1 \in M\), if the intersection \(m_0^{\perp} \cap m_1^{\perp}\) is empty, then \(m_0^{\perp} \subset M-m_1^{\perp}\) lies in a contractible open subset of \(M\), so the inclusion \(m_0^{\perp} \to M\) gives a trivial morphism in cohomology \(\Cohom{*}{M} \to \Cohom{*}{m_0^{\perp}}\).
As a CW complex, \(M\) is a ball glued to \(m_0^{\perp}\), so all cohomology of \(M\) except in top dimension injects via \(\Cohom{*}{M} \to \Cohom{*}{m_0^{\perp}}\), making \(M\) a cohomology sphere.
So if \(M\) is a Blaschke manifold not modelled on a sphere, then any two points have intersecting cut loci.
Take a point \(m \in m_0^{\perp} \cap m_1^{\perp}\).
The normal space to \(m_0^{\perp}\) at \(m\) is the collection of tangents of geodesics from \(m_0\) to \(m\).
So the normal spaces to \(m_0^{\perp}\) and \(m_1^{\perp}\) at \(m\) are disjoint linear subspaces.
Therefore the intersection \(m_0^{\perp} \cap m_1^{\perp}\) is a smooth submanifold.
It is not known if the intersection of 3 cut loci is a smooth submanifold.
At a point \(m \in m_0^{\perp}-m_1^{\perp}\), the unique geodesic from \(m\) to \(m_1\) is \emph{not} normal to \(m_0^{\perp}\), so has nonzero projection to the tangent space to \(m_0^{\perp}\).
So if \(m_1 \in m_0^{\perp}\) then the function \(m \mapsto d\of{m_1,m}^2\) on \(m_0^{\perp}-m_1^{\perp}\) has a unique critical point at \(m_1\), and its reverse gradient flow identifies \(m_0^{\perp}-m_1^{\perp}\) with a ball.
So as a CW complex, \(m_1^{\perp}\) is a ball glued to some lower dimensional manifold.
The dual of the hyperplane class, having the same dimension as the cut locus of any point, is a multiple of the homology class of that cut locus, and vice versa.
As a CW complex, \(\cut{m_0}\) only has one cell of the dimension of the cut locus, the open cell, so the homology classes agree (up to sign, but the hyperplane class was only defined up to a sign).

\section{Quotienting by an involution}

\begin{proposition}\label{proposition:involution}
Suppose that \(M\) is a Blaschke manifold and each point of \(M\) has discrete cut locus.
Then each point \(m \in M\) has cut locus a point \(s(m)\) and the map \(s \colon M \to M\) is an isometry with \(s ^{-1} = s\).
The quotient \(\bar{M}=M/s\) is also a Blaschke manifold, diffeomorphic to real projective space, with cut locus a hypersurface.

Conversely, if \(\bar{M}\) is a Blaschke manifold diffeomorphic to real projective space, then the universal covering space \(M\) of \(\bar{M}\) is a Blaschke manifold diffeomorphic to a sphere and each point of \(M\) has cut locus a point. 
\end{proposition}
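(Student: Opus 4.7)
The plan is to reduce both directions to results already proved in the paper, using corollary~\ref{corollary:sphere.and.rp} and theorem~\ref{theorem:2.to.1}.

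For the forward direction, I first show the discrete cut locus is actually a single point. The great sphere fibration \(S \to \bar{S}\) constructed before theorem~\ref{theorem:2.to.1} has \(\bar{S}\) of the same dimension as the cut locus, which is zero; but \(\bar{S}\) is the image under a smooth map of the connected sphere \(S = S^{n-1}\), so it collapses to a point, and the cut locus \(\cut{m}\) is a single point \(s(m)\). The assignment \(m \mapsto s(m)\) is smooth because it coincides locally with time-\(D\) geodesic flow along any smooth unit vector field composed with the bundle projection, and this endpoint is independent of the initial direction. Symmetry of the cut relation gives \(s \circ s = \mathrm{id}\).

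To show \(s\) is an isometry, pick \(p, q \in M\) and a minimal geodesic \(\gamma\) from \(p\) to \(q\) of length \(r=d(p,q)\le D\); extend it to the periodic geodesic of period \(2D\). Every geodesic issuing from \(q\) reaches its unique cut point \(s(q)\) at time \(D\), so \(s(q)=\gamma(r+D)\); likewise \(s(p)=\gamma(D)\), and the subarc \(\gamma|_{[D,D+r]}\) has length \(r\), giving \(d(s(p),s(q))\le r\). Applying the inequality to the pair \((s(p),s(q))\) and using \(s\circ s=\mathrm{id}\) yields equality. Since \(s\) has no fixed points (it displaces each point by \(D>0\)), the involution acts freely, and \(\bar{M}=M/s\) inherits a smooth Riemannian structure. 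A direct calculation using isometricity of \(s\) and the identity \(d(n,s(m))=D-d(m,n)\) (valid for any \(n\), from extending the minimal geodesic \(m\to n\) on to \(s(m)\) at time \(D\)) gives \(d_{\bar{M}}([m],[n])=\min(d(m,n),\,D-d(m,n))\), so \(\bar{M}\) has diameter \(D/2\), all geodesics are simply closed loops of length \(D\), and the cut locus of \([m]\) is the image of the metric sphere \(\{n:d(m,n)=D/2\}\), which is \(s\)-invariant with free \(s\)-action and hence a smooth hypersurface in \(\bar{M}\). Thus \(\bar{M}\) is Blaschke with hypersurface cut locus, and corollary~\ref{corollary:sphere.and.rp}(1) identifies it as diffeomorphic to real projective space.

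For the converse, \(\pi_1(\bar{M})=\mathbb{Z}/2\), so the universal cover \(M\to\bar{M}\) is 2-to-1, and the pullback metric makes \(M\) Blaschke. The ``2-1 covering'' statement of theorem~\ref{theorem:2.to.1} then tells us directly that the cut locus of every point of \(M\) is a single point, and corollary~\ref{corollary:sphere.and.rp}(2) gives \(M\) diffeomorphic to a sphere. The main obstacle I foresee is verifying cleanly that \(\bar{M}\) inherits the Blaschke structure with the expected hypersurface cut locus, since that requires tracking how minimal geodesics and metric spheres descend to the quotient; once that bookkeeping is done, everything else is an immediate application of previously established results.
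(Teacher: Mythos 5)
Your proposal is correct and follows essentially the same route as the paper: collapse the discrete cut locus to a single point via the great sphere fibration, show \(s\) is an isometric involution by extending minimal geodesics to the periodic geodesics of length \(2D\), pass to the quotient, and handle the converse by combining theorem~\ref{theorem:2.to.1} with corollary~\ref{corollary:sphere.and.rp}. The only notable (and harmless) variation is that you certify the Blaschke property of \(\bar{M}\) by computing the quotient distance \(\min\pr{d(m,n),D-d(m,n)}\) and invoking the constant-radius-cut-locus criterion, where the paper instead counts minimal geodesics between points of \(\bar{M}\) by lifting them to \(M\); your version is somewhat more explicit.
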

\begin{proof}
Suppose \(M\) has discrete cut locus, hence a single point and \(M\) is homeomorphic to a sphere.
Sliding a point \(m\) along a geodesic, we slide its antipodal point \(s(m)\).
Take two points \(m\) and \(n\), not conjugate, and take the minimal geodesic between them, and then slide along it until we get \(m\) and \(n\) to slide into \(s(m)\) and \(s(n)\) simultaneously, with the minimal geodesic sliding into place between them, so \(s\) preserves distance.
Therefore \(s\) is smooth and the quotient \(\bar{M}\) is a compact connected Riemannian manifold.
There  are no conjugate points along a minimal geodesic on \(\bar{M}\) because a conjugate point lifts up to \(M\).
Imagine that there are two distinct minimal geodesics between two points of \(\bar{M}\).
Upstairs on \(M\), each point has two preimages, and a unique minimal geodesic between each of the pairs, which must map to these two minimal geodesics on \(\bar{M}\); these two minimal geodesics must therefore fit into one periodic geodesic.
Therefore there is a unique minimal geodesic connecting any two points of \(\bar{M}\) unless the points lie at a distance equal to diameter, in which case there are two minimal geodesics.
So \(\bar{M}\) is Blaschke and has index zero, i.e. a hypersurface as cut locus.

On the other hand, if \(\bar{M}\) is a Blaschke manifold with cut locus a hypersurface, our cohomology calculation says that \(\bar{M}\) has the homology and homotopy groups of real projective space, and the index of our minimal geodesic at each cut point is zero, so the sphere fibration is that of real projective space, i.e. \(\bar{M}\) is diffeomorphic to real projective space.
Theorem~\vref{theorem:2.to.1} says that there is a finite covering Blaschke manifold \(M\) with cut locus equal to conjugate locus, so not isometric to \(\bar{M}\).
So the covering is 2-1, \(M\) has discrete cut loci, and is diffeomorphic to a Euclidean sphere.
\end{proof}

\section{Volume and Jacobi vector fields}\label{section:Volume.and.Jacobi}

Suppose that \(M\) is a Blaschke manifold of diameter \(D\).
The exponential map takes a ball \(B=B_D(0) \subset T_p M\) onto a dense open ball in \(M\), so \(\vol{M}=\int_B \det \exp'\).
But \(\det \exp'(v)\) is the wedge product of a basis of Jacobi vector fields, orthonormal at the origin, along the geodesic with velocity \(v\).
The Jacobi vector field equation contains the curvature.
Berger (\cite{Besse:1978} appendix D) proved that the volume of a Blaschke manifold exceeds that of the sphere of the same diameter unless the Blaschke manifold is isometric to the sphere \cite{Besse:1978} appendix E.  
More generally, the injectivity radius \(\inj{M}\) of any compact connected Riemannian manifold \(M\) satisfies
\begin{equation}\label{equation:Berger.inequality}
\frac{\inj[n]{M}}{\vol{M}} \le \frac{\inj[n]{S^n}}{\vol{S^n}}
\end{equation}
where \(S^n\) is an \(n\)-dimensional round sphere in Euclidean space, with equality just when \(M\) is isometric to such a sphere; see \cite{Chavel:2006} pp. 319--331 for a complete proof. 
The proof is a subtle analysis of the Jacobi vector field equation along each geodesic, making use of periodicity and reversibility of geodesics, and a complicated integral inequality proved by Kazdan \cite{Kazdan:1982}.
Combined with the results on volume above, this proves the Blaschke conjecture for homology spheres.
The Blaschke conjecture for homology real projective spaces follows by taking a double cover.

\section{Green's proof for surfaces}
To give a taste for Berger's arguments for Blaschke manifolds modelled on spheres, we give Green's proof of the Blaschke conjecture for surfaces.

\begin{lemma}\label{lemma:Berger.inequality}
Along any minimal geodesic of length \(\ell\) on any surface of Gauss curvature \(K\),
\[
\int_0^{\ell} K(s) \sin^2\of{\frac{\pi s}{\ell}} \le \frac{\pi^2}{2\ell},
\]
with equality just when
\[
K(s) = \pr{\frac{\pi}{\ell}}^2
\]
is the same constant curvature as along a geodesic on a Euclidean sphere of Riemannian diameter \(\ell\).
\end{lemma}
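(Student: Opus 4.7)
The plan is to use the second variation of arc length on a well chosen normal variation of the minimal geodesic. Let \(\gamma\colon [0,\ell] \to M\) be the given minimal geodesic, parameterized by arclength, and let \(N(s)\) be a parallel unit normal vector field along \(\gamma\). Consider the variation whose variation vector field is
\[
J(s) = \sin\!\left(\frac{\pi s}{\ell}\right) N(s),
\]
which vanishes at the endpoints. Since \(\gamma\) is minimizing and \(J\) vanishes at the endpoints, the second variation (index form) must be nonnegative:
\[
I(J,J) \defeq \int_0^{\ell}\left(\lvert J'(s)\rvert^2 - K(s)\,\lvert J(s)\rvert^2\right)ds \ge 0.
\]
On a surface, normal directions to a geodesic are one dimensional, so this is the usual Jacobi form with the Gauss curvature playing the role of the sectional curvature.

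Next, I would compute each piece. Since \(N\) is parallel, \(J'(s) = (\pi/\ell)\cos(\pi s/\ell)\,N(s)\), so
\[
\lvert J'(s)\rvert^2 = \left(\frac{\pi}{\ell}\right)^2\cos^2\!\left(\frac{\pi s}{\ell}\right), \qquad \lvert J(s)\rvert^2 = \sin^2\!\left(\frac{\pi s}{\ell}\right),
\]
and the elementary integral \(\int_0^\ell \cos^2(\pi s/\ell)\,ds = \ell/2\) produces
\[
\int_0^\ell \lvert J'\rvert^2\,ds = \frac{\pi^2}{2\ell}.
\]
Combining with \(I(J,J) \ge 0\) yields exactly the asserted inequality.

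For the equality case, if \(\int_0^\ell K\sin^2(\pi s/\ell)\,ds = \pi^2/(2\ell)\), then \(I(J,J)=0\). Since the minimizing property makes \(I\) positive semidefinite on the space of normal fields vanishing at the endpoints, any field that makes \(I\) vanish must be a critical point of \(I\), i.e.\ a Jacobi field: standard first variation of \(I\) shows that \(J'' + KJ = 0\). Substituting \(J(s) = \sin(\pi s/\ell) N(s)\) with \(N\) parallel gives
\[
-\left(\frac{\pi}{\ell}\right)^2\sin\!\left(\frac{\pi s}{\ell}\right) + K(s)\sin\!\left(\frac{\pi s}{\ell}\right) = 0
\]
on \((0,\ell)\), so \(K(s) = (\pi/\ell)^2\) there, and by continuity on all of \([0,\ell]\). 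This is exactly the Gauss curvature of a round \(2\)-sphere whose great circles have length \(2\ell\), i.e.\ Riemannian diameter \(\ell\).

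The only delicate point is the equality argument: one needs to know that \(I(J,J)=0\) for a field vanishing at the endpoints forces \(J\) to lie in the null space of the self adjoint Jacobi operator, hence to satisfy the Jacobi equation. This is routine but requires invoking the variational characterization of Jacobi fields as critical points of the index form; everything else is a one line integration.
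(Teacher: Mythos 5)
Your proof is correct and is essentially the paper's argument in different clothing: the paper phrases the nonnegativity of \(\int_0^\ell\bigl(|J'|^2-K|J|^2\bigr)\,ds\) as the nonnegativity of the spectrum of the Sturm--Liouville operator \(-d^2/ds^2-K\) on functions vanishing at the endpoints (deduced from the Jacobi solution \(h\) of the normal-coordinate metric having no interior zeros along a minimal geodesic), whereas you deduce it directly from the second variation of energy, and the test function \(\sin(\pi s/\ell)\) and the equality analysis via the Jacobi equation are identical in both. No gaps.
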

\begin{proof}
Take geodesic normal coordinates
\(
ds^2 = dr^2 + h(r,\theta)^2 \ d\theta^2
\).
The Sturm--Liouville operator
\[
-\frac{d^2}{ds^2} - K(s)
\]
along a geodesic is self-adjoint on square integrable functions vanishing on endpoints.
The function \(h\) is null for this operator and vanishes at the origin, and vanishes next just when we hit a conjugate point.
By the Sturm comparison theorem, the eigenvalues of this operator are positive or zero.
Plug the function \(\sin\of{\pi s/\ell}\) into the operator.
\end{proof}

\begin{lemma}
Suppose that \(M_0\) is a 2-sphere or real projective plane, with standard round metric.
The injectivity radius \(\inj{M}\) of any Riemannian metric on a compact surface \(M\) diffeomorphic to \(M_0\) satisfies
\[
\frac{\inj[2]{M}}{\area{M}} \le \frac{\inj[2]{M_0}}{\area{M_0}} 
\]
with equality just when \(M\) is isometric to \(M_0\) up to constant positive rescaling.
\end{lemma}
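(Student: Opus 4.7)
The plan is Green's integral-geometric averaging argument. For \(M_0 = S^2\), let \(\ell = \inj{M}\). Every unit-speed geodesic segment of length \(\ell\) emanating from a point of \(M\) is minimizing, so lemma~\ref{lemma:Berger.inequality} applies at each \((p,v)\) in the unit tangent bundle \(UM\), giving \(\int_0^\ell K(\gamma_{p,v}(s))\sin^2\of{\pi s/\ell}\,ds \le \pi^2/(2\ell)\). I would integrate both sides against the Liouville measure \(dL\) on \(UM\) (total mass \(2\pi\area{M}\)). The right side becomes \(\pi^3\area{M}/\ell\). On the left, I swap the order of integration and use invariance of \(dL\) under the geodesic flow \(\phi_s\): since \(\int_{UM} K(\pi\circ\phi_s(p,v))\,dL = \int_{UM} K(\pi(p,v))\,dL = 2\pi\int_M K\,dA = 4\pi^2\chi(M)\) by Gauss--Bonnet, and \(\int_0^\ell \sin^2\of{\pi s/\ell}\,ds = \ell/2\), the left side equals \(2\pi^2\ell\chi(M)\). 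Rearranging gives \(\inj[2]{M}/\area{M} \le \pi/(2\chi(M))\), which for \(\chi=2\) is exactly the round ratio \(\pi/4 = \inj[2]{S^2}/\area{S^2}\).

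For \(M_0 = \RP{2}\), I would pass to the orientation double cover \(\tilde M \cong S^2\) with pullback metric, so that \(\area{\tilde M} = 2\area{M}\). A short case-analysis of how a failure of \(\exp_{\tilde p}\) to be a diffeomorphism projects down to \(M\) (conjugate point, or a pair of minimizing geodesics whose endpoints lie over the same point either by equal or by opposite preimages) produces \(\inj{\tilde M} \ge 2\inj{M}\). Applying the \(S^2\) case to \(\tilde M\) then yields \((2\inj{M})^2 \le (\pi/4)(2\area{M})\), i.e.\ \(\inj[2]{M}/\area{M} \le \pi/8 = \inj[2]{\RP{2}}/\area{\RP{2}}\).

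For the equality case, equality in the averaged inequality forces equality in lemma~\ref{lemma:Berger.inequality} along almost every unit-speed minimal geodesic of length \(\ell\), which forces \(K \equiv \pr{\pi/\ell}^2\) pointwise (geodesics through each point cover \(M\) in almost every direction). So \(M\) has constant positive curvature, and by Killing--Hopf it is isometric to \(M_0\) up to a positive rescaling (the area identity then pins down the scale uniquely).

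The main obstacle is the doubling step \(\inj{\tilde M} \ge 2\inj{M}\) for \(M_0=\RP{2}\): for arbitrary (non-Blaschke) metrics one has only the a priori bound \(\inj{M} = \min\pr{\inj{\tilde M},\tfrac12\operatorname{sys} M}\), so the inequality needs extra justification and, failing that, one would substitute Pu's isoperimetric inequality \(\area{M} \ge (2/\pi)\operatorname{sys}^2 M\) together with \(\operatorname{sys} M \ge 2\inj{M}\) to recover the sharp \(\RP{2}\) bound.
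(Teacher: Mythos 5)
Your \(S^2\) case is correct and is essentially the paper's argument: the paper integrates the pointwise Berger--Green inequality of lemma~\ref{lemma:Berger.inequality} over a compact \(4\)-manifold \(B\) fibering over the unit tangent bundle, which is exactly your Liouville-measure average; the Fubini swap, the invariance of \(dL\) under geodesic flow, Gauss--Bonnet, and \(\int_0^\ell\sin^2\of{\pi s/\ell}\,ds=\ell/2\) all appear in the same roles, and your constants \(\pi^3\area{M}/\ell\) and \(2\pi^2\ell\chi\) are the right ones. The equality discussion (pointwise equality in the lemma forces \(K\equiv\pr{\pi/\ell}^2\), hence a round metric) also matches.

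The genuine issue is the one you flagged yourself: the step \(\inj{\tilde M}\ge 2\inj{M}\) for the orientation double cover is false for a general metric on \(\RP{2}\). If the injectivity radius of \(M\) is realized by a conjugate point (e.g.\ an antipodally symmetric metric on \(S^2\) with a small region of very large curvature, pushed down to \(\RP{2}\)), then \(\inj{\tilde M}=\inj{M}\), not \(2\inj{M}\); one only has \(\inj{M}=\min\pr{\inj{\tilde M},\tfrac12\operatorname{sys}M}\), so the covering argument alone cannot close the \(\RP{2}\) case. Your fallback via Pu's inequality \(\operatorname{sys}^2 M\le\frac{\pi}{2}\area{M}\) together with \(\operatorname{sys}M\ge 2\inj{M}\) does give the sharp \(\inj[2]{M}/\area{M}\le\pi/8\) with the correct equality case, and this is a legitimately different route from the paper. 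It is worth noting that the paper's own uniform treatment via the Euler characteristic yields only \(\inj[2]{M}/\area{M}\le\pi/\pr{2\chi}\), which for \(\chi=1\) is \(\pi/2\), weaker than the stated sharp constant \(\pi/8\) (and its equality condition \(K\equiv\pr{\pi/\ell}^2\) is incompatible with \(\ell\) being the injectivity radius of a round \(\RP{2}\)); so for arbitrary metrics on \(\RP{2}\) some supplement such as Pu's inequality, or a restriction to Blaschke metrics where the double cover genuinely doubles the injectivity radius, is in fact required. Your proposal, with the Pu fallback rather than the doubling claim, is the correct version.
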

\begin{proof}
Let \(\ell=\inj{M}\), \(A=\area{M}\) and \(\chi\) be the Euler characteristic of \(M\).
Consider the bundle \(B \to M\) consisting of tuples \((p,q,u,s)\) so that \(p, q \in M\), \(0 \le s \le \ell\) and \(u \in T_p M\) is a unit vector with \(q=\exp_p (su)\).
Clearly \(p \colon (p,q,u,s) \in B \mapsto p \in M\) is a fiber bundle, with fibers \(UT_p M \times [0,\ell]\), so \(B\) is a compact 4-manifold with boundary.
Note that there is another bundle map: \(q \colon (p,q,u,s) \in B \mapsto q \in M\).
Let \(\xi\) be the connection 1-form on \(UTM\), which we pullback to \(B\) by a third bundle map \(u \colon (p,q,u,s) \in B \mapsto u \in UTM\).
Recall that along any curve of the form \(u\defeq e^{i\theta}u_0\) in \(UTM\), \(\omega=d\theta\).
On \(B\), take the differential form
\[
\Omega \defeq \pr{\frac{\pi^2}{2 \ell^2} p^* dA - q^*\pr{K dA} \sin^2\of{\frac{\pi s}{\ell}}} \wedge ds \wedge \xi.
\]
Pushing down (integrating over the fibers),
\(u_* \Omega = f \, dA \wedge \xi\)
where 
\[
f(u) \defeq \int_0^{\ell} \pr{\frac{\pi^2}{2 \ell^2} - K\of{\exp_p\of{su}} \sin^2\of{\frac{\pi s}{\ell}}} \, ds.
\]
Lemma~\vref{lemma:Berger.inequality} says  that \(f \ge 0\) so \(\int_B \Omega=\int_{UTM} u_* \Omega \ge 0\).
Moreover, \(\int_B \Omega=0\) just when the curvature is constant and equal to the curvature of the model \(M_0\).
On the other hand, 
\begin{align*}
\int_B q^*\pr{K dA} \wedge \sin^2\of{\frac{\pi s}{\ell}} ds \wedge \xi
&=
2 \pi \int_M K \, dA \int_0^{\ell} \sin^2\of{\frac{\pi s}{\ell}} \, ds,
\\
&=
\pi  \chi \ell,
\end{align*}
while
\begin{align*}
\int_B 
\frac{\pi^2}{2 \ell^2}
p^*dA \wedge ds \wedge \xi
&=
\frac{\pi^3 A}{\ell},
\end{align*}
so that 
\[
0 \le 
\int_B \Omega = \pi^4 \ell
\pr{
\frac{A}{\ell^2}-\frac{2 \chi}{\pi}
}
\]
with equality just for \(M\) of constant curvature equal to that of \(M_0\), so isometric to \(M_0\).
\end{proof}

The same trick gives some information in all dimensions:
\begin{theorem}\label{theorem:scalar}
Take a compact connected Riemannian manifold \(M\).
Let \(\delta_M\) be the \emph{conjugate radius}: the shortest distance along any geodesic between a point and its first conjugate point.
Denote the scalar curvature of \(M\) as \(\scalar{M}\).
Let \(M_0\) be a positively curved space form (i.e. compact connected Riemannian manifold of constant sectional curvature) of the same dimension as \(M\), for example a Euclidean sphere, of any radius, or a real projective space.
Then
\[
\frac{\delta^2_M \int_M \scalar{M}}{\vol{M}} 
\le 
\frac{\delta^2_{M_0} \int_{M_0} \scalar{M_0}}{\vol{M_0}},
\]
with equality just when \(M\) is also a positively curved space form,
with constant positive scalar curvature equal to \(\pi^2/\delta_M^2\).
\end{theorem}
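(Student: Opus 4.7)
The plan is to run the double-integration trick of Green's proof, with scalar curvature playing the role of Gauss curvature, and with a Ricci analog of lemma~\ref{lemma:Berger.inequality} in place of the Sturm--Liouville inequality along a single geodesic. The pointwise estimate along one geodesic will be integrated over the unit tangent bundle \(UTM\), and the fibrewise average of \(\Ricci{u,u}\) over \(UT_qM\) will produce a multiple of the scalar curvature at \(q\).

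First I would prove the Ricci analog: along any geodesic \(\gamma\colon [0, \ell] \to M\) of length \(\ell = \delta_M\), the index form \(I\of{V,V} \defeq \int_0^\ell \pr{|V'|^2 - \langle R\of{\gamma', V}\gamma', V\rangle}\,ds\) is nonnegative on perpendicular variations vanishing at the endpoints, since \(\gamma\) has no interior conjugate point. Applied to the trial fields \(V_i\of{s} \defeq \sin\of{\pi s/\ell}\, E_i\of{s}\), where \(E_1, \ldots, E_{n-1}\) is a parallel orthonormal frame perpendicular to \(\gamma'\), and summed over \(i\) using \(\sum_i \langle R\of{\gamma', E_i}\gamma', E_i\rangle = \Ricci{\gamma', \gamma'}\), the nonnegativity of \(I\) yields
\[
\int_0^\ell \Ricci{\gamma'\of{s}, \gamma'\of{s}} \sin^2\of{\pi s/\ell}\, ds \le \frac{\pr{n-1}\pi^2}{2\ell},
\]
with equality exactly when each \(V_i\) is a Jacobi field along \(\gamma\), forcing every sectional curvature of a 2-plane containing \(\gamma'\of{s}\) to equal \(\pr{\pi/\ell}^2\) along \(\gamma\).

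Next I would integrate this estimate over \(UTM\) with respect to the Liouville measure \(\mu\). Fubini and the previous step bound the double integral
\[
J \defeq \int_{UTM} \int_0^\ell \Ricci{\gamma'_u\of{s}, \gamma'_u\of{s}}\, \sin^2\of{\pi s/\ell}\, ds\, d\mu\of{u}
\]
above by \(\vol{M} \cdot \vol{S^{n-1}} \cdot \pr{n-1}\pi^2/\pr{2\ell}\). On the other hand, the geodesic flow \(\phi_s\) on \(UTM\) preserves \(\mu\), so the change of variables \(v \defeq \phi_s\of{u}\) turns \(\Ricci{\gamma'_u\of{s}, \gamma'_u\of{s}}\) into \(\Ricci{v,v}\); the fibrewise trace identity \(\int_{UT_qM} \Ricci{v,v}\, d\sigma\of{v} = \vol{S^{n-1}} \scalar{M}\of{q}/n\) together with \(\int_0^\ell \sin^2\of{\pi s/\ell}\, ds = \ell/2\) then give \(J = \pr{\ell \vol{S^{n-1}}/\pr{2n}} \int_M \scalar{M}\, dV\). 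Comparing the two evaluations yields \(\ell^2 \int_M \scalar{M}/\vol{M} \le n\pr{n-1}\pi^2\); for a space form \(M_0\) of constant sectional curvature \(k\) (so \(\delta_{M_0} = \pi/\sqrt{k}\) and \(\scalar{M_0} = n\pr{n-1}k\)) both sides evaluate to \(n\pr{n-1}\pi^2\), giving the claimed inequality.

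Equality in the integrated bound forces equality in the Ricci estimate along every geodesic, and the equality clause of the first step then forces every sectional curvature of \(M\) to equal \(\pi^2/\delta_M^2\), so \(M\) is a positively curved space form. The main subtlety I anticipate is tracking the normalization constants in the Liouville change of variables; the index-form step is a routine higher-dimensional rerun of the second variation reasoning already implicit in lemma~\ref{lemma:Berger.inequality}.
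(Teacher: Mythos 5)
Your proposal is correct and follows essentially the same route as the paper's proof: nonnegativity of the index form on \([0,\delta_M]\) applied to \(\sin\of{\pi s/\ell}\) times a parallel normal frame, traced to get the Ricci bound, then integrated over \(UTM\) using invariance of the Liouville measure under the geodesic flow and the fibrewise trace identity for scalar curvature. The equality analysis via Jacobi fields forcing constant sectional curvature also matches the paper.
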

Note that both sides of the inequality are invariant under rescaling the metrics by positive constants.
\begin{proof}
Rescale to arrange \(\delta_M=\delta_{M_0}=\pi\) and let \(n\) be the dimension of \(M\).
Take a unit speed geodesic \(\gamma \colon [0, \pi] \to M\).
Let \(\nu = \nu(s)\) be a parallel unit normal vector field along \(\gamma\). 
The geodesic \(\gamma\) has no conjugate points except perhaps its end points.
The vector field \(X=\sin(s)\nu\) gives nonnegative second variation of energy as we vary the geodesic \cite{Chavel:2006} p. 82 theorem II.5.4:
\[
0 \le \int_0^{\pi} \pr{\left|\nabla_s X\right|^2 - \left<R(X,\dot\gamma)\dot\gamma,X\right>} \, ds
\]
with equality just for Jacobi vector fields.
Applied to our vector field \(X=\sin(s)\nu\), 
\[
\int_0^{\pi} \sin^2(s) \left<R(\nu,\dot\gamma)\dot\gamma,\nu\right> \, ds \le \frac{\pi}{2}
\]
with equality just when \(X\) is a Jacobi vector field, that is, just when the sectional curvature is \(K(\nu,\dot\gamma)=1\).
Integrate this inequality over all choices of parallel unit normal vector fields \(\nu\), each given an arbitrary choice of initial vector \(\nu(\gamma(0))\):
\[
\int_0^{\pi} \sin^2(t) \Ricci{\dot\gamma} \, dt \le (n-1)\frac{\pi}{2}.
\]
Equality occurs just when any 2-plane containing the tangent line to \(\gamma\) has unit sectional curvature.
For any tangent vector \(v\), let \(g^t v\) be the geodesic flow of \(v\) at time \(t\), i.e. the tangent vector at time \(t\) to the constant speed geodesic with initial velocity \(v\).
The Liouville measure \(dL\) is \(g^t\)-invariant.
Integrate over the unit tangent bundle \(UTM\), integrating over all points of all unit speed geodesics:
\begin{align*}
\int_{UTM} \int_0^{\pi} \sin^2(t) \Ricci{g^t v} \, dt \, dL
&=
\int_0^{\pi} \sin^2(t) \int_{UTM} \Ricci{g^t v} \, dL \, dt,
\\
&=
\int_0^{\pi} \sin^2(t) \int_{UTM} \Ricci{v} \, dL \, dt,
\\
&=
\int_0^{\pi} \sin^2(t) dt \int_{UTM} \Ricci{v} \, dL,
\\
&=
\frac{\pi}{2} \int_{UTM} \Ricci{v} \, dL,
\\
&=
\frac{\pi}{2} \int_{m \in M} \int_{v \in UT_m M} \Ricci{v} \, dv \, dm,
\\
&=
\frac{\pi}{2n} \vol{S^{n-1}} \int_M \scalar{} \, dm.
\end{align*}
\end{proof}

\section{Sphere fibrations of spheres}\label{section:sphere.fibrations}

As we have seen, the homeomorphism type of any Blaschke manifold \(M\) is encoded in a fibration of a Euclidean sphere \(UT_m M \subset T_m M\) by great spheres.
For example, if \(M=\CP{n}\) then the unit sphere \(S^{2n-1} \subset T_m M=\C{n}\) is fibered over the cut locus of \(m\) by the Hopf fibration: \(S^1 \to S^{2n-1} \to \CP{n-1}\).
Each row in the table on page~\pageref{table:know} (except the first two) arose by proving that any great sphere fibration of a sphere is isomorphic to a Hopf fibration, with various notions of isomorphism.
The generic great sphere fibration does not arise from a Blaschke metric.
Great sphere fibrations are of independent interest in the theory of nonlinear elliptic systems of partial differential equations \cite{McKay:2004}.
The homotopy theory of great sphere fibrations remains a basic question, related both to the Blaschke conjecture and to the homotopy theory of elliptic systems, and thus to continuity methods for solving elliptic systems.

Take a great sphere fibration \(S^{k-1} \to S^{n+k-1} \to X^n\). 
Identify each great sphere with the linear subspace it spans in \(\R{n+k}\): the sphere fibration embeds the base \(X\) into a Grassmannian \(X \subset \Gr{k}{n+k}\).
So great sphere fibrations are identified with certain submanifolds of Grassmannians.

For each \(k\)-plane \(P \subset \R{n+k}\), the \emph{bad set} of \(P\), denoted \(B_P \subset \Gr{k}{n+k}\) is the set of all \(k\)-planes with nontrivial intersection with \(P\); call \(P\) the \emph{vertex} of its bad set: \cite{Gluck/Warner/Yang:1983} p. 1047.
Split \(\R{n+k}=P \oplus P^{\perp}\), say, and write every linear subspace \(Q\) close enough to \(P\) as the graph of a linear map in \(P^* \otimes P^{\perp}\).
The bad set is the set of linear maps in \(P^* \otimes P^{\perp}\) which are not injective, so the bad set is a cone, at least near \(P\).
Take a great sphere fibration \(S^{k-1} \to S^{n+k-1} \to X^n\) with associated image \(X^n \subset \Gr{k}{n+k}\).
Because \(X\) is a great sphere fibration, each point of \(S^{n+k-1}\) can only lie in one fiber of \(X\), i.e. each unit vector \(v \in \R{n+k}\) which lies in a plane \(P \in X\) can not lie in any other plane \(Q \in X\), so \(Q \notin B_P\).
In other words, \(X\) intersects each bad set only at its vertex.

A \emph{bad cone} is the tangent cone of a bad set at its vertex.
The fiber bundle mapping \(f \in \operatorname{Epi}\of{\R{n+k},\R{k}} \mapsto \ker(f) \in \Gr{k}{n+k}\) has a local section, say \(P \mapsto s(P)\), unique up to replacing \(s(P)\) by \(g(P)s(P)\) for a map \(g\) valued in \(\GL{k,\R{}}\).
For each \(v \in T_P \Gr{k}{n+k}\), the linear map \(s(P)^{-1} \left.s'(P)v\right|P \colon P \to \R{n+k}/P\) is invariantly defined, giving a linear isomorphism \(T_P \Gr{k}{n+k}=P^* \otimes \pr{\R{n+k}/P}\).
This isomorphism identifies the bad cone with the set of all noninjective linear maps \(P^* \otimes \pr{\R{n+k}/P}\).

A compact connected submanifold \(X^n \subset \Gr{k}{n+k}\) arises from a great sphere fibration just when  \(X\) intersects each bad set either nowhere or only at its vertex transversely \cite{McKay:2004}.
Transversality with bad sets is an open condition on a submanifold of the Grassmannian, so if \(X \subset \Gr{k}{n+k}\) is a submanifold arising from a great sphere fibration, then every \(C^1\)-small perturbation of \(X\) also arises from a unique great sphere fibration.
We can express transversality with bad sets in a simple intrinsic description.
Once again every tangent vector to a Grassmannian is intrinsically identified with a linear map: \(T_P \Gr{k}{n+k}=P^* \otimes \left(\R{n+k}/P\right)\).
Take a submanifold \(X \subset \Gr{k}{n+k}\).
Each tangent space \(T\defeq T_P X\) is a linear subspace \(T \subset T_P \Gr{k}{n+k}\), so a linear space of linear maps, \(T \subset P^* \otimes \left(\R{n+k}/P\right)\) or alternatively is a bilinear map \(T^* \otimes P^* \otimes \left(\R{n+k}/P\right)\).
Each vector \(v \in P\) is a linear map \(T \to \R{n+k}/P\).
A submanifold \(X \subset \Gr{k}{n+k}\) intersects a bad cone \(B_P\) at its vertex transversely just when this linear map is a linear isomorphism for every \(v \ne 0\): \cite{Gluck/Warner/Yang:1983} p. 1047.

The simplest choice of dimensions:
\begin{lemma}%
[\cite{Gluck/Warner/Yang:1983} p. 1046]%
\label{lemma:projective.plane.hopf}
Any great sphere fibration \(S^{k-1} \to S^{2k-1} \to X^k\) has base \(X\) homeomorphic to \(S^k\).
\end{lemma}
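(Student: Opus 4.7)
The plan is to combine the long exact sequence in homotopy of the fibration with Poincaré duality and the topological generalized Poincaré conjecture. First I would write down the homotopy exact sequence of the fibration \(S^{k-1} \to S^{2k-1} \to X\):
\[
\cdots \to \pi_i\pr{S^{k-1}} \to \pi_i\pr{S^{2k-1}} \to \pi_i(X) \to \pi_{i-1}\pr{S^{k-1}} \to \pi_{i-1}\pr{S^{2k-1}} \to \cdots.
\]
Since \(S^{2k-1}\) is \((2k-2)\)-connected, in the range \(1 \le i < 2k-1\) both sphere terms vanish and the connecting homomorphism gives \(\pi_i(X) \cong \pi_{i-1}\pr{S^{k-1}}\). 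Thus \(X\) is \((k-1)\)-connected, and \(\pi_k(X) \cong \pi_{k-1}\pr{S^{k-1}} \cong \mathbb{Z}\). (The degenerate case \(k=1\) is the double cover \(S^1 \to S^1\), so \(X \cong S^1\) directly.)

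Next I would promote this to a cohomological statement. For \(k \ge 2\), \((k-1)\)-connectedness makes \(X\) simply connected, hence orientable as a closed \(k\)-manifold (it is a manifold by the submanifold-of-Grassmannian description noted earlier in the excerpt). Hurewicz gives \(\Cohom{k}{X} \cong \mathbb{Z}\) via Poincaré duality and \(H_k(X;\mathbb{Z}) \cong \pi_k(X) \cong \mathbb{Z}\), while all lower-degree cohomology vanishes. A map \(S^k \to X\) representing a generator of \(\pi_k(X)\) therefore induces isomorphisms on all integral homology, and Whitehead's theorem upgrades it to a homotopy equivalence \(X \simeq S^k\).

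Finally I would invoke the topological generalized Poincaré conjecture to conclude that a closed \(k\)-manifold homotopy equivalent to \(S^k\) is homeomorphic to \(S^k\): Smale for \(k \ge 5\), Freedman for \(k=4\), and Perelman for \(k=3\), with the cases \(k \le 2\) handled by the elementary classification of low-dimensional closed manifolds. The homotopy-theoretic bookkeeping is essentially automatic; the genuine obstacle is the need to quote these deep results to pass from homotopy equivalence to homeomorphism in dimensions three and four. An author writing before 2003 could only have asserted the conclusion up to homotopy equivalence in dimension \(k=3\).
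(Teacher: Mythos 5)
Your argument is correct, but it takes a genuinely different and much heavier route than the paper. The paper's proof (following Gluck--Warner--Yang) is entirely elementary: fix one fiber, a great \(\pr{k-1}\)-sphere \(S\subset S^{2k-1}\), choose a great \(k\)-sphere \(S^+\supset S\) and one of the two open hemispheres into which \(S\) divides it; a dimension count shows every other fiber meets \(S^+\) in exactly two antipodal points, exactly one of which lies in the chosen open hemisphere (it cannot meet \(S\) itself, since distinct fibers are disjoint), giving a bijection of \(X-\set{S}\) with an open \(k\)-ball; compactness then exhibits \(X\) as the one-point compactification of \(\R{k}\), i.e.\ \(S^k\), with no algebraic topology at all. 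Your route --- the homotopy exact sequence of the fibration, Hurewicz, Whitehead, and then the generalized Poincar\'e conjecture --- does reach the same conclusion, but at the cost of quoting Freedman in the case \(k=4\), which is precisely the quaternionic case that actually matters for the Blaschke problem, and Perelman in the case \(k=3\) (which in fact never occurs: by Adams, great sphere fibrations \(S^{k-1}\to S^{2k-1}\) exist only for \(k=1,2,4,8\), so the lemma is vacuous for \(k=3\)). Your closing historical remark is therefore off the mark: Gluck, Warner and Yang obtained the homeomorphism in 1983 without any form of the Poincar\'e conjecture, because the one-point-compactification argument produces the homeomorphism directly rather than passing through a homotopy equivalence. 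What your approach buys in exchange is generality --- it would apply to any fibration of \(S^{2k-1}\) by \(\pr{k-1}\)-spheres with closed manifold base, not just by \emph{great} spheres --- but the greatness of the fibers is exactly what makes the elementary argument work, and it is the hypothesis actually available here.
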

\begin{proof}
Take a fiber of \(X\), a great \((k-1)\)-sphere \(S \subset S^{2k-1}\).
Pick any great \(k\)-sphere \(S^+\) containing \(S\), so \(S \subset S^+\) divides \(S^+\) into two hemispheres; choose one.
Every point of \(X-S\) represents a great \((k-1)\)-sphere, and intersects \(S^+\) in two antipodal points, one in our chosen hemisphere, a bijection of \(X-S\) with that hemisphere.
Identify \(X\) with the one-point compactification of that hemisphere.
\end{proof}

\begin{lemma}\label{lemma:projective.plane.bundle}
Any two great sphere fibrations \(S^{k-1} \to S^{2k-1} \to X^k\) and \(S^{k-1} \to S^{2k-1} \to X_0^k\) admit a topological bundle isomorphism
\begin{center}
\begin{tikzcd}
S^{2k-1} \arrow{r} \arrow{d} & S^{2k-1} \arrow{d} \\
X^k \arrow{r} & X_0^k.
\end{tikzcd}
\end{center}
\end{lemma}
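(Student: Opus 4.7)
The plan is to align the two fibrations by a rigid motion of the ambient sphere, obtaining a shared distinguished fiber and hemisphere, and then to build the bundle isomorphism fiberwise. First I would exploit the transitive action of \(O(2k)\) on triples \((S, S^+, H)\) where \(S \subset S^+ \subset S^{2k-1}\) is a great \((k-1)\)-sphere inside a great \(k\)-sphere and \(H\) is a closed hemisphere of \(S^+\) with \(\partial H = S\). Applying a suitable \(g \in O(2k)\) carries \(X_0\) to another great sphere fibration \(g(X_0)\), which is bundle isomorphic to \(X_0\) via the map induced by \(g\); choosing \(g\) to align the triples, I may assume that \(X\) and \(X_0\) share a common \((S, S^+, H)\).

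Under this alignment, the construction of Lemma~\vref{lemma:projective.plane.hopf} sends both \(X\) and \(X_0\) to the same quotient \(H/\partial H\) by the same rule, inducing a tautological homeomorphism \(f : X \to X_0\) that fixes the distinguished point corresponding to \(S\) and is the identity on \(X - S \cong H^\circ \cong X_0 - S\).

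To cover \(f\) by a homeomorphism \(F : S^{2k-1} \to S^{2k-1}\) I would proceed fiberwise. For each \(x \in H^\circ\) the fibers \(F_x \subset S^{2k-1}\) and \(F_x^0 \subset S^{2k-1}\) of the two fibrations are unit spheres of \(k\)-planes \(V_x\) and \(V_x^0\), each containing the line \(\mathbb{R}x\); both families depend continuously on \(x\) and, as \(x\) approaches \(\partial H\), both converge to the common \(k\)-plane \(V = \operatorname{span}(S)\), because the fiber through any point of \(S\) is \(S\) itself. I would choose continuous trivializations of the \(k\)-plane bundles \(V \to \bar{H}\) and \(V^0 \to \bar{H}\) over the closed disk \(\bar{H}\) so that their restrictions to \(\partial H\) coincide, and then match frames to obtain a continuous family of linear isomorphisms \(\psi_x : V_x \to V_x^0\) fixing \(x\) and tending to the identity as \(x \to \partial H\). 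Restricting to unit vectors gives homeomorphisms \(\phi_x : F_x \to F_x^0\), and extending by the identity on \(S\) defines \(F\).

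The main obstacle will be arranging the trivializations of \(V\) and \(V^0\) to agree on \(\partial H\). Both bundles are trivializable over \(\bar{H}\) because \(\bar{H}\) is contractible, and both restrict on \(\partial H\) to the constant bundle with fiber \(V\); the residual freedom in choosing boundary trivializations is controlled by a class in \(\pi_{k-1}(\operatorname{GL}(k,\mathbb{R}))\), which can be matched by precomposing one trivialization with a suitable bundle automorphism of the other. Once this is arranged, continuity of \(F\) at points of \(S\) follows from \(\psi_x \to \mathrm{id}\); \(F\) is a continuous bijection of the compact Hausdorff space \(S^{2k-1}\) and therefore a homeomorphism, and the identity \(\pi_0 \circ F = f \circ \pi\) holds by construction, giving the desired topological bundle isomorphism.
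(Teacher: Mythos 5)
Your route is genuinely different from the paper's (which constructs the base homeomorphism \(\phi\) via an evaluation map \(P=\Set{y=Ax}\mapsto Ax_0\) and then builds the fiberwise map by interpolating a \(\mathbb{K}\)-algebra structure \(J_P\) across \(X\)), but it has a gap at exactly the step you flag as ``the main obstacle,'' and the way you dispose of that obstacle is incorrect. An automorphism of a trivializable bundle over the disk \(\bar{H}\) is a map \(\bar{H}\to\GL{k,\R{}}\), so its restriction to \(\partial H\cong S^{k-1}\) is null-homotopic; precomposing a trivialization with such an automorphism therefore cannot change the homotopy class of the boundary discrepancy \(d\colon \partial H\to\GL{k,\R{}}\), \(d(x)=(\tau^0_x)^{-1}\tau_x\). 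That class in \(\pi_{k-1}\pr{\GL{k,\R{}}}\) is precisely the clutching function of the rank-\(k\) bundle over \(S^k=\bar H\cup_{\partial H}\bar H\) obtained by gluing \(\mathcal{V}\) to \(\mathcal{V}^0\) along their common boundary restriction, and its vanishing is equivalent (up to composing \(f\) with a reflection of \(S^k\)) to the two tautological bundles \(U_X, U_{X_0}\to S^k\) being isomorphic. For \(k=2\) this is fine: rank-2 bundles over \(S^2\) are classified by the Euler class, and the Gysin sequence forces both Euler classes to be generators. But for \(k=4\) the classification is by \(\pi_3(O(4))\cong\mathbb{Z}\oplus\mathbb{Z}\), detected by the Euler class \emph{and} \(p_1\); the Gysin sequence controls only the Euler class, and the equality of the Pontryagin numbers is exactly the nontrivial theorem of Sato and Mizutani cited later in the paper. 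The \(k=8\) case is analogous. So your argument silently assumes the hardest part of the statement.

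By contrast, the paper sidesteps this obstruction constructively: after taking \(X_0\) to be the standard Hopf fibration, it transports the algebra \(\mathbb{K}\) of linear maps defining \(X_0\) to a continuously varying algebra \(J_P\) acting on each fiber \(P\in X\) (via the interpolation \(E=(1-g)I+gA^{-1}\), \(F=AE\)), and then defines the fiber map on a single vector \((x_0,Ax_0)\) and extends by \(\mathbb{K}\)-linearity. That explicit fiberwise linear identification is what replaces the clutching-function argument you would need to justify. To repair your proof you would either need to import the characteristic-class computation (Sato--Mizutani for \(k=4\), and its octonionic analogue for \(k=8\)), or find some structure on \(\mathcal{V},\mathcal{V}^0\subset\bar H\times\R{2k}\) (such as the transported algebra action) that pins down the boundary trivializations; contractibility of \(\bar H\) alone does not.
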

\begin{proof}
Take any distinct points \(P_0, P_1 \in X\), corresponding to some linear subspaces.
With elementary linear algebra, arrange that  \(P_0=\R{k} \oplus 0, P_1 = 0 \oplus \R{k} \subset \R{2k}\) and \(\Set{y=x} \in X\), and the same for \(X_0\), so that \(X\) intersects \(X_0\) at the points \(P_0, P_1\) and \(\Set{y=x}\).
Pick a unit vector \(v \in P_1\).

For any vector \(x_0 \in \R{k}-0\), if \(P=\Set{y=Ax}\), let \(\ev[x_0]{P}{}\defeq Ax_0 \in \R{k}\).
No two \(k\)-planes in \(X\) intersect away from the origin, so \(\ev{x_0}{} \colon X-P_1 \to \R{k}\) is 1-1.
Every vector in \(\R{2k}\) lies in one of the \(k\)-planes in \(X\), so \(\ev{x_0}{}\) is onto.
Clearly
\[
\ev[x_0]{P}{'}A_0 = A_0 x_0
\]
is 1-1, so \(\ev{x_0}{}\) is a diffeomorphism.
The equation \(\ev[x_0,X]{P}{} = \ev[x_0,X_0]{Q}{}\) has a unique solution \(Q=\phi(P)\), a diffeomorphism \(\phi \colon X-P_1 \to X_0-P_1\), extending to a bijection \(\phi \colon X \to X_0\) by \(\phi\of{P_1}\defeq P_1\).
In other words \(\phi\Set{y=Ax}=\Set{y=Bx}\) just when \(Ax_0=Bx_0\).

We want to show that \(\phi\) is continuous at \(P_1\).
Clearly \(\phi^{-1}\) is given by the same construction, swapping \(X\) and \(X_0\); if we prove that \(\phi\) is continuous then \(\phi\) is a homeomorphism.
Take some \(P(t) \to P_1\) in \(X\), say \(P(t)=\Set{y=A(t)x}\).
We can assume that \(A(t)^{-1}=t A_0 + O(t)^2\), with \(A_0 \ne 0\).
Because \(X\) is not tangent to the bad cone, \(A_0\) is also invertible.
Write \(Q(t)=\phi(P(t))=\Set{y=B(t)x}\).
We have
\(
A(t) x_0 = B(t) x_0,
\)
so that 
\[
B(t)^{-1}\pr{t A_0 + O(t)^2}^{-1} x_0 = x_0, 
\]
so that
\[
B(t)^{-1}A_0^{-1} x_0 = O(t).
\]
Again by transversality to the bad cone, the map
\[
\Set{x=Cy}\in X \mapsto CA_0^{-1} x_0 \in \R{k}
\] 
is a diffeomorphism, so \(B(t)^{-1}=O(t)\).
By transversality to the bad cone, we can solve the linear equation \(B_0 A_0^{-1} x_0=x_0\) uniquely for \(B_0 \in T_{P_1} X_0\).
Expand
\begin{align*}
\pr{B(t)^{-1}-tB_0}A_0^{-1} x_0 
&=
B(t)^{-1}A_0^{-1} x_0 - tx_0,
\\
&=
B(t)^{-1}\pr{tA^{-1} + O(t)}x_0-tx_0,
\\
&=
O(t)^2
\end{align*}
so that \(B(t)^{-1}=tB_0+O(t)^2\).
Therefore \(\phi\) is a homeomorphism.
We leave the reader to check that \(\phi\) is a diffeomorphism just when \(X\) and \(X_0\) are tangent at \(P_1\).

Take any smooth function \(g \colon X \to \R{}\) so that \(0 \le g \le 1\) and \(g\of{P_0}=0\) and \(g\of{P_1}=1\) with nondegenerate minimum and maximum, i.e. \(g''\of{P_0} > 0\) and \(g''\of{P_1} < 0\).
At each \(P=\Set{y=Ax} \in X-\Set{P_1}\), let \(E=(1-g)I+gA^{-1}\) and \(F=AE\).
Since \(A\) has no negative eigenvalues, \(E\) and \(F\) are invertible linear maps.
The standard Hopf fibration \(X_0\) is the set of real, complex, quaternionic or octave lines, determined by some algebra \(\mathbb{K}=\R{}, \C{}, \Ha{}\) or \(\Oc{}\) of linear maps \(J \colon \R{2k} \to \R{2k}\).
For each \(J \in \mathbb{K}\) and point \(P \in X\), let
\[
J_P \colon \pr{x,y} \mapsto \pr{EJE^{-1}x, FJF^{-1} y}. 
\]
For each \(P \in X\), these linear maps \(J_P\) form an algebra isomorphic to \(\mathbb{K}\), and \(P\) is a \(J_P\)-invariant linear subspace.
Check (again using transversality with the bad cone) that \(J_P\) extends continuously to all of \(X\), giving a real, complex, quaternionic or octave linear structure to each subspace \(P \in X\), agreeing with \(J\) on \(P_0\) and \(P_1\).

Take the tautogical vector bundle \(U_X \to X\) with fiber \(U_{X,P}=P\).
Define a bundle isomorphism \(\Phi \colon U_X \to U_{X_0}\) above \(\phi\) on the tautological bundles as follows.
For each point \(P \in X-\Set{P_1}\), say \(P=\Set{y=Ax}\) and with \(Q=\phi(P)=\Set{y=Bx}\), take the vector \(v=\pr{x_0,Ax_0} \in P\) and let \(\Phi\of{v}=v\).
Extend \(\Phi\) by real, complex, quaternionic or octave linearity by requiring \(\Phi \circ J_P=J \circ \Phi\) for \(J \in \mathbb{K}\).
This defines \(\Phi\) away from \(P_1\); on \(v \in P_1\), let \(\Phi(v)=v\).
Since \(S^{2k-1}\) is the set of unit vectors in \(U_X\), \(\Phi \colon S^{2k-1} \to S^{2k-1}\).
\end{proof}

\begin{corollary}
Any Blaschke manifold modelled on a real, complex, quaternionic or octave projective plane is homeomorphic to its model.
\end{corollary}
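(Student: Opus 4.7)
The plan is to apply lemma~\vref{lemma:projective.plane.bundle} to the great sphere fibration on the unit tangent sphere of $M$, and then descend the resulting bundle isomorphism through the quotient description supplied by theorem~\vref{theorem:2.to.1}.

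First I would dispose of the real case: when $M$ is modelled on $\RP{2}$, the cut locus of any point is a hypersurface in the $2$-manifold $M$, and corollary~\vref{corollary:sphere.and.rp} already gives $M$ diffeomorphic to $\RP{2}$. So I would then suppose $M$ is modelled on $\KP{K}{2}$ with $\mathbb{K} \in \{\C{}, \Ha{}, \Oc{}\}$, and set $k = \dim_{\mathbb{R}} \mathbb{K} \in \{2, 4, 8\}$. Since the model is simply connected, $M$ equals its own universal cover, and theorem~\vref{theorem:2.to.1}, after rescaling, identifies $M$ with $\bar{B}/\!\!\sim$, where $\bar{B} \subset T_{m_0} M$ is the closed unit ball in $\mathbb{R}^{2k}$ and $\sim$ collapses the fibers of the great sphere fibration on $\partial \bar{B} = S^{2k-1}$. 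The cohomological computation (hyperplane class in real dimension $k$) forces this fibration to take the form $S^{k-1} \to S^{2k-1} \to X^k$, exactly the setting of lemma~\vref{lemma:projective.plane.bundle}.

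Next I would apply that lemma with $X_0$ the base of the standard Hopf fibration coming from the model $\KP{K}{2}$, producing a topological bundle isomorphism, in particular a homeomorphism $\Phi \colon S^{2k-1} \to S^{2k-1}$ which carries fibers of $\sim$ to fibers of $\sim_0$. Extending $\Phi$ by the identity across the open interior of $\bar{B}$ gives a continuous bijection $\bar{B}/\!\!\sim \to \bar{B}/\!\!\sim_0$, which is then a homeomorphism by compactness of the source together with Hausdorffness of the target (alternatively, by the universal property of the quotient topology). Hence $M$ is homeomorphic to $\KP{K}{2}$.

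The real substance is the continuity of the bundle isomorphism at the exceptional fiber $P_1$ inside the proof of lemma~\vref{lemma:projective.plane.bundle}, which has already been verified there; given that, nothing remains beyond routine point-set descent to the quotient. The corresponding smooth statement fails in general, which is why one obtains only a homeomorphism and not a diffeomorphism by this argument.
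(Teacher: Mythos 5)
Your route is the paper's own: the corollary is stated there with no proof precisely because it is meant to follow immediately from Lemma~\ref{lemma:projective.plane.bundle} together with the ball-quotient description of Theorem~\ref{theorem:2.to.1}, which is exactly how you organize it (and splitting off the real case via Corollary~\ref{corollary:sphere.and.rp} is fine). One step is wrong as written: extending \(\Phi\) \emph{by the identity} on the open interior of \(\bar{B}\) does not give a continuous map, since any boundary point \(x\) with \(\Phi(x)\ne x\) is a limit of interior points that you are fixing. What you want is the radial (cone) extension \(rv\mapsto r\,\Phi(v)\) for \(v\in S^{2k-1}\) and \(0\le r\le 1\); this is continuous, restricts to \(\Phi\) on the boundary, hence carries \(\sim\) to \(\sim_0\), and the rest of your descent argument (continuous bijection from a compact space to a Hausdorff space) then goes through verbatim.
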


For a great sphere fibration \(S^3 \to S^7 \to X^4\), the homotopy class of the associated inclusion \(X^4 \subset \Gr{4}{\R{8}}\) is the same as that of the associated inclusion arising from the Hopf fibration \cite{Sato/Mizutani:1984}, so that the Pontryagin number is equal to that arising in the Hopf fibration.
Heavy differential topology ensures \(PL\)-isomorphism with the Hopf fibration.

For a great sphere fibration \(S^1 \to S^{2n+1} \to X^{2n}\), the embedded submanifold \(X \subset \Gr{2}{\R{2n}}\) has tangent spaces \(T \subset P^* \otimes \pr{\R{2n+2}/P}\).
We can always lift \(X\) to the oriented Grassmannian, so assume that each \(P \in X\) is an oriented 2-plane.
Using a little linear algebra, there are canonically determined complex structures on \(T\) and \(\R{2n+2}\) so that \(P \subset \R{2n+2}\) is a complex linear subspace and the inclusion \(T \subset P^* \otimes \pr{\R{2n+2}/P}\) is has image in the complex linear tensor product and is complex linear \cite{McKay:2004}.
So at each point \(P \in X\), we have a chosen complex structure \(J_P \colon \R{2n+2} \to \R{2n+2}\).
Map \(X \to \CP{2n-1}\) by \(P \mapsto \left[x-\sqrt{-1}J_P x\right]\) for \(P=\left[x \wedge J_P x\right]\).
Let \(Z_0\) be a generically chosen linear subspace \(Z_0=\CP{n} \subset \CP{2n-1}\).
If every linear \(\CP{n+1} \subset \CP{2n-1}\) containing \(Z_0\) happens to intersect the image of \(X\) transversally (as happens when \(X\) arises from the Hopf fibration), then it does so at a unique point, so that \(X\) is identified with the set of all linear subspaces \(\CP{n+1} \subset \CP{2n-1}\) containing \(Z_0\), a space diffeomorphic to \(\CP{n}\) \cite{McKay:2004}, and it is easy to extend this diffeomorphism to identify the fibration \(S^{2n+1} \to X^{2n}\) with the Hopf fibration.
(Yang \cite{Yang:1990} correctly criticized the argument of Sato \cite{Sato:1982} but the map \(X \to \CP{2n-1}\) is essentially Sato's.
I incorrectly stated that, for any compact elliptic submanifold \(X\) of the Grassmannian, the generic \(Z_0\) would actually have the property above: transverse intersection of and \(X\) with every linear \(\CP{n+1} \subset \CP{2n-1}\) containing \(Z_0\).)

Any great sphere fibration \(S^3 \to S^{4n+3} \to X^{4n}\) admits a reduction of structure group to \(\operatorname{Sp}(1)\), by examining maps between classifying spaces \cite{Sato:1985}.
The classifying map for this bundle factors through a map \(X \to \HP{n}\), which consequently is an isomorphism on homotopy groups, so a homotopy equivalence by Whitehead's theorem \cite{Hatcher:2002} p. 346 theorem 4.5.
Sato states without proof that, from the \(K\)-theory of \(\HP{n}\), this homotopy equivalence should ensure the existence of a homeomorphism \(X \cong \HP{n}\).

\newcommand{\KK}{\ensuremath{\mathbb{K}}}
Take two great sphere fibrations \(S^{k-1} \to S^{p+k-1} \to X^p\) and \(S^{k-1} \to S^{q+k-1} \to Y^q\), with the same fiber \(S^{k-1}\).
Our standard examples of such are Hopf fibrations arising from vector spaces over \(\KK=\R{}, \C{}\) or \(\Ha{}\) for \(k=1,2,4\).
We can take the sum of two vector spaces, and thereby generate a new Hopf fibration.
By analogy, we can try to create a ``sum'' \(X \oplus Y\).
Suppose that on each subspace \(x \in X \subset \Gr{k}{\R{p+k}}\)
we can pick the structure of a \(\KK\)-vector space, varying smoothly with \(x\).
Each nonzero vector \(v \in \R{p+k}\) lies in a unique such subspace: rescale \(v\) to unit length and apply the fiber bundle map \(S^{p+k-1} \to X^p\).
Hence we have a unique action of the scalars of \(\KK\) on each nonzero vector in \(\R{p+k}\).
Extend to have every scalar in \(\KK\) fix \(0 \in \R{p+k-1}\), so that every element of \(\KK\) acts on \(\R{p+k}\) continuously, and smoothly away from the origin.
Do the same with \(\R{q+k}\).
Allow each scalar in \(\KK\) to act on \(\R{p+k} \times \R{q+k}\) by acting on the factors individually.
The ``sum'' of the great sphere fibrations is the fibration of \(S^{p+q+2k-1}\) whose fibers are the unit vectors in the subspaces invariant under the \(\KK\)-action.

There is an obvious criticism: this approach depends on choice of the two \(\KK\)-actions, one on \(\R{p+k}\) and one on \(\R{q+k}\).
It turns out that for \(\KK=\R{}\) or \(\C{}\), there is a natural choice of such \(\KK\)-action \cite{McKay:2004}.
For \(\KK=\Ha{}\) or \(\KK=\Oc{}\) no natural choice is known, nor is it known if there is such an action.

Consider the smoothness of the \(\KK\)-action near the points \(v=(x,y) \in S^{p+q+2k-1}\) where one of \(x\) or \(y\) vanishes.
The action turns out to be smooth just when one of \(X\) or \(Y\) is precisely a Hopf fibration.
For this reason, my proof \cite{McKay:2004} theorem 1 is invalid.
Yang's papers \cite{Yang:1990,Yang:1993} are similarly flawed, as first noticed by Werner Ballmann and Karsten Grove: the action he calls \(\rho'\) in \cite{Yang:1990} p. 523, which he claims is smooth, is smooth precisely when the great circle fibration (from which he constructs \(\rho'\)) is a Hopf fibration.

\section{The Radon transform}

If \(M\) is a Blaschke manifold of diameter \(D\) with metric \(d\), let 
\[
F=\Set{(p,q)|p, q \in M, d(p,q)=D}.
\]
We have obvious surjective maps
\[
\begin{tikzcd}
{} & F^{2n-k} \arrow{dr}{p} \arrow{ld}[swap]{q} & {} \\
M^n & {} & M^n
\end{tikzcd}
\]
\(p(p,q)=p\), \(q(p,q)=q\).
The \emph{Radon transform} of a volume form \(\Omega\) on \(M\) is \(\Omega'=q_* p^* \Omega\), where \(q_*\) is the pushforward, i.e. integration over the fibers.
Since \(d\) commutes with pullback and pushforward, \(0=d\Omega'\).
Reznikov \cite{Reznikov:1985b} states without proof that \(\Omega'>0\) on the cut locus of any point, i.e. the pullback of \(\Omega'\) to any cut locus is nowhere vanishing with positive integral; for elementary proofs too long to include here see \cite{McKay:2005} p. 9 lemma 9.

\begin{theorem}%
[\cite{McKay:2005}]%
Every Blaschke manifold modelled on \(\CP{2}\) is diffeomorphic to \(\CP{2}\).
\end{theorem}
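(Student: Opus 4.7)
The plan is to use the Radon transform $\Omega' = q_* p^* \Omega$ of a positive volume form $\Omega$ on $M$ to build a symplectic form on $M$ that, combined with the smooth classification of great-circle fibrations of $S^3$, gives a smooth identification with $\CP{2}$.

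The first step is to establish Reznikov's positivity, proved elementarily in \cite{McKay:2005} Lemma 9: the closed 2-form $\Omega'$ restricts to a positive volume form on every cut locus $\cut{m}$. Pairing with any cut locus exhibits $[\Omega']$ as a positive multiple of the hyperplane class $h \in \Cohom{2}{M}$, and $[\Omega']^2$ a positive multiple of the orientation class.

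The second step is to upgrade positivity on cut loci to non-degeneracy everywhere. By the ``Sphere fibrations of spheres'' section the great-circle fibration of the unit sphere $UT_m M \cong S^3$ is smoothly equivalent to the Hopf fibration, endowing $T_m M$ with a canonical (up to sign) complex structure $J_m$; varying $m$, these patch smoothly to an almost complex structure $J$ on $M$. By Proposition~\vref{proposition:cut} the 2-plane $T_m \cut{c}$ is the orthogonal complement of the 2-plane of geodesic directions from $m$ to $c$; that latter 2-plane is spanned by the Hopf fiber over $c$ and so is a $J_m$-complex line, and its orthogonal complement is another $J_m$-complex line. Thus, as $c$ ranges over $\cut{m} \cong S^2$, the 2-planes $T_m \cut{c}$ sweep out all $J_m$-complex lines in $T_m M$. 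Reznikov's positivity then says $\Omega'_m$ is positive on every $J_m$-complex line, i.e., $\Omega'$ is $J$-tamed, hence automatically non-degenerate; so $\Omega'$ is a symplectic form on $M$ taming $J$.

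The main obstacle is to pass from the almost-K\"ahler data $(M, \Omega', J)$ to a diffeomorphism with $\CP{2}$. Each cut locus is then an embedded $J$-holomorphic and $\Omega'$-symplectic 2-sphere in the class $h$ with self-intersection $+1$ (the self-intersection matches $h^2$ by the cohomology computation), and these spheres form a smooth 4-parameter family covering $M$. The McDuff--Gromov classification of rational symplectic 4-manifolds containing an embedded symplectic $(+1)$-sphere then identifies $(M, \Omega')$ symplectomorphically with $\CP{2}$ equipped with its Fubini--Study form, producing in particular a diffeomorphism $M \cong \CP{2}$. The most delicate point is confirming smooth dependence of $J_m$ on $m$, and equivalently the smoothness of the great-circle fibration structure on the tangent 3-spheres as $m$ varies, which is where the results of the ``Sphere fibrations of spheres'' section on the smooth Hopf classification do the real work.
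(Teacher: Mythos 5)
Your proposal follows essentially the same route as the paper: the Radon transform of a volume form is a symplectic form positive on cut loci (Reznikov's positivity, proved in \cite{McKay:2005}), a cut locus is then an embedded symplectic $2$-sphere, and the Gromov--McDuff/Lalonde--McDuff classification of symplectic $4$-manifolds containing a symplectic sphere (via Seiberg--Witten theory) yields a symplectomorphism, hence diffeomorphism, with $\CP{2}$. One caveat on your taming step: the fibers of a general great circle fibration of $UT_mM\cong S^3$ are \emph{not} the complex lines of a single linear complex structure $J_m$ (by Gluck--Warner they form the graph of a distance-decreasing map between the two $S^2$ factors of $\Gr{2}{4}$, constant only for linear Hopf fibrations), and the orthogonal complement of a $J$-line is a $J$-line only for metric-compatible $J$; nonetheless, positivity of $\Omega'_m$ on the full $2$-sphere of tangent planes $T_m\cut{c}$ still forces $\left|\Omega'_{m,+}\right|\ne\left|\Omega'_{m,-}\right|$ in the self-dual/anti-self-dual splitting, so non-degeneracy survives and the argument goes through.
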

\begin{proof}
The Radon transform of any volume form is a symplectic form, so our Blaschke manifold \(M\) is a symplectic 4-manifold containing a symplectic 2-sphere (any cut locus).
Using Seiberg--Witten theory, one sees that \(M\) is symplectomorphic to \(\CP{2}\), up to a nonzero constant factor in the symplectic form \cite{Lalonde/McDuff:1996}.
\end{proof}

\section{Smooth projective planes}

An \emph{incidence plane} is a pair of sets \(P, L\) and a set \(F \subset P \times L\).
Elements of \(P, L, F\) are respectively called \emph{points}, \emph{lines} and \emph{pointed lines}.
A point \(p \in P\) is \emph{on} a line \(\ell \in L\) if \((p,l) \in F\). 
An incidence plane is a \emph{projective plane} if
\begin{enumerate}
\item any two distinct points \(p, q\) lie on a unique line \(pq\),
\item any two distinct lines \(\ell, m\) have a unique point \(\ell m\) lying on both of them,
\item there are 4 points, no 3 on the same line.
\end{enumerate}
Swapping \(P\) with \(L\) gives the \emph{dual} projective plane.
A \emph{smooth} projective plane is one where \(P\) and \(L\) are compact connected manifolds of positive dimension and the maps \(p, q \mapsto pq\) and \(\ell,m \mapsto \ell m\) are smooth.
The \emph{standard} projective planes are \(P=\RP{2}, \CP{2}, \HP{2}\) and \(\OP{2}\) with obvious \(L\) and \(F\). 
The space of smooth projective planes modulo diffeomorphism is infinite dimensional, as any \(C^2\) small perturbation of \(F \subset P \times L\) is also a smooth projective plane.
Using heavy differential topology:
\begin{theorem}%
[\cite{Kramer/Stolz:2007,McKay:2005}]%
\label{theorem:proj.plane.diffeo}
The space of points of any smooth projective plane is diffeomorphic to a standard projective plane.
\end{theorem}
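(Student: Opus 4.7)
The plan is to reduce Theorem~\ref{theorem:proj.plane.diffeo} to results already established above about great sphere fibrations. Fix a point $p_0$ in the point space $P$ of a smooth projective plane. Classical dimension theory forces $\dim P = 2k$ for some $k \in \{1,2,4,8\}$; each line is a smoothly embedded compact $k$-manifold in $P$, while the pencil $\Lambda_{p_0} \subset L$ of lines through $p_0$ is itself a smooth compact $k$-manifold.

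The first substantive step is to produce a smooth great $(k-1)$-sphere fibration of the unit sphere $UT_{p_0} P \cong S^{2k-1}$ with base $\Lambda_{p_0}$. For each $\ell \in \Lambda_{p_0}$ the tangent space $T_{p_0}\ell$ is a $k$-dimensional linear subspace of $T_{p_0} P$. Since two distinct lines through $p_0$ intersect only at $p_0$, these tangent $k$-planes intersect pairwise only at the origin: if two lines were tangent at $p_0$ then, because the incidence map $(p,q)\mapsto pq$ is smooth, a small perturbation of a pair of nearby points would violate either the unique-line-through-two-points axiom or the unique-intersection-of-two-lines axiom. Hence $\ell \mapsto T_{p_0}\ell$ is a smooth embedding of $\Lambda_{p_0}$ into $\Gr{k}{2k}$ whose image gives a great sphere fibration $S^{k-1} \to S^{2k-1} \to \Lambda_{p_0}$.

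Next I would invoke Lemma~\vref{lemma:projective.plane.bundle} to produce a topological bundle isomorphism between this fibration and the Hopf fibration of a standard projective plane $P_0$ over the real division algebra of dimension $k$. Using that bundle isomorphism, define $\Phi \colon P \to P_0$ by sending $p_0$ to the corresponding focal point $q_0 \in P_0$ and, for each $q \ne p_0$, by following the line $p_0 q$ across the bundle isomorphism and marking off the same radial position as $q$ occupies on that line. Continuity yields a homeomorphism $P \to P_0$, and on $P \setminus \{p_0\}$ the construction is smooth because the line foliation of $P \setminus \{p_0\}$ and the bundle isomorphism away from the distinguished fiber are both smooth.

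The hard part is promoting $\Phi$ to a diffeomorphism at $p_0$. Lemma~\vref{lemma:projective.plane.bundle} is smooth at the distinguished fiber precisely when the two associated submanifolds of the Grassmannian are tangent there, which generically fails, so a priori $\Phi$ is only continuous at $p_0$. Overcoming this is the substantive differential-topological content of \cite{Kramer/Stolz:2007}: one appeals to smoothing theory and to the classification of smooth manifolds homeomorphic to the standard compact rank one symmetric spaces in the exceptional dimensions $2, 4, 8, 16$, together with vanishing of the relevant smoothing obstructions, to replace $\Phi$ by an honest diffeomorphism. This smoothing step, not the projective-geometric setup, is where the real work lies.
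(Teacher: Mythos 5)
The paper itself offers no proof of this theorem; it is quoted from \cite{Kramer/Stolz:2007,McKay:2005} with the explicit caveat ``heavy differential topology'', so your proposal can only be measured against those sources. The first half of your outline is the standard setup and is essentially right: the tangent spaces \(T_{p_0}\ell\) of the lines through \(p_0\) form a spread of \(T_{p_0}P\), hence a great sphere fibration \(S^{k-1} \to S^{2k-1} \to \Lambda_{p_0}\), and together with the fact that the complement of a line is an open \(2k\)-cell this determines the homeomorphism type of \(P\) via lemma~\ref{lemma:projective.plane.bundle}. Two caveats even here: you need to argue that the planes \(T_{p_0}\ell\) actually cover all of \(T_{p_0}P\) (pairwise transversality plus a dimension count and compactness of the pencil), and ``marking off the same radial position'' is not meaningful --- lines are closed \(k\)-manifolds and there is no metric; the correct statement is that \(P\) is a closed \(2k\)-ball glued to a line along the great sphere fibration of its boundary, in parallel with theorem~\ref{theorem:2.to.1}.

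The genuine gap is your final step. ``Smoothing theory plus vanishing of the relevant obstructions'' is precisely the assertion that has to be proved, and it does not follow from generalities. In dimension \(4\) there is no smoothing theory upgrading homeomorphisms to diffeomorphisms at all; the \(\CP{2}\) case goes through symplectic geometry and Seiberg--Witten theory (compare the Radon transform section above), not smoothing. In dimensions \(8\) and \(16\) a homeomorphism to \(\HP{2}\) or \(\OP{2}\) does not force a diffeomorphism: exotic smooth structures exist in these homotopy types --- the Eells--Kuiper projective planes appear in this very paper as \(SC^m\) manifolds not diffeomorphic to the model --- and \(\Theta_8\) and \(\Theta_{16}\) are nontrivial, so connected sums with exotic spheres are a live possibility. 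The actual content of \cite{Kramer/Stolz:2007} is to exploit the projective plane structure (the normal bundle of a line and the disc bundle decomposition you set up) to compute Pontryagin classes and the Eells--Kuiper \(\mu\)-invariant of \(P\) and show they agree with the standard model, which is what excludes the exotic candidates. As written, your argument establishes the homeomorphism type (and a diffeomorphism only in dimension \(2\), where it is automatic); the theorem's differential-topological core is asserted rather than supplied.
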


\begin{lemma}%
[\cite{Gluck/Warner/Yang:1983,Reznikov:1985,Salzmann:1995}]
If a Blaschke manifold is modelled on a standard projective plane, then it is a smooth projective plane.
\end{lemma}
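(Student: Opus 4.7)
The plan is to equip the Blaschke manifold $M$ with the structure of a smooth projective plane by taking the lines to be cut loci. Put $P \defeq M$ and $L \defeq \Set{\cut{r} | r \in M}$. The assignment \(\sigma \colon r \mapsto \cut{r}\) is a bijection \(P \to L\) (no point lies in its own cut locus in a Blaschke manifold), so one can transport the smooth structure of $M$ to $L$ via $\sigma$. Let $F \subset P \times L$ be the flag locus of pairs \((p, \ell)\) with \(p \in \ell\); under $\sigma$ this is \(\Set{(p,r) \in M \times M | d(p,r) = D}\), where $D$ is the diameter. By symmetry of $d$, \(p \in \cut{r} \iff r \in \cut{p}\), so $\sigma$ is a polarity. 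Under the identification $L \cong M$ by $\sigma$, axioms (1) and (2) of a projective plane become identical, and both reduce to the central claim: for any distinct $a, b \in M$, the intersection \(\cut{a} \cap \cut{b}\) is a single point. Axiom (3) (four points, no three collinear) is immediate from the positive codimension of cut loci.

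For the central claim, transversality comes first. By Proposition~\ref{proposition:cut}, at any \(m \in \cut{a} \cap \cut{b}\) the normal spaces \(N_m \cut{a}\) and \(N_m \cut{b}\) are linearly disjoint, with each of dimension $n/2$ where $n = \dim M$ (a Blaschke manifold modelled on a standard projective plane has cut loci of half the dimension of $M$, by the cohomology computation recalled in the paper). Their dimensions sum to $n$, so they span $T_m M$ and \(\cut{a}\) meets \(\cut{b}\) transversally; hence \(\cut{a} \cap \cut{b}\) is a finite, smooth $0$-dimensional submanifold. Next an algebraic count: the cohomology computation shows \([\cut{a}]\) is Poincar\'e-dual (up to sign) to the hyperplane class \(h \in H^{n/2}(M)\), and \(h^2\) generates \(H^n(M;\mathbb{Z})\) (or \(H^n(M;\mathbb{Z}/2)\) if $M$ is modelled on \(\RP{2}\)), so the algebraic intersection of \(\cut{a}\) and \(\cut{b}\) is \(\pm 1\), giving \(|\cut{a} \cap \cut{b}| \geq 1\).

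To match the algebraic count with the geometric count, I would first pass to the double cover when $M$ is modelled on \(\RP{2}\) so that $M$ is orientable. Each cut locus \(\cut{r}\) inherits a canonical orientation from the great-sphere fibration of Theorem~\ref{theorem:2.to.1} on the unit sphere in $T_r M$, whose fiber orientation is provided by Lemma~\ref{lemma:projective.plane.bundle}. At any intersection \(m \in \cut{a} \cap \cut{b}\), the splitting \(T_m M = N_m \cut{a} \oplus N_m \cut{b}\) (normal spaces being spans of minimal-geodesic velocities from $a$ and $b$) identifies the sign of the transversal intersection with a ratio of orientations coming from the Hopf-fibration structures at $a$ and at $b$; rigidity of great-sphere fibrations (Lemma~\ref{lemma:projective.plane.bundle}) forces this sign to be the same at every intersection point. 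Consequently the geometric count equals the algebraic count, namely $1$. With the central claim established, smoothness of \((p,q) \mapsto pq\) and \((\ell, m) \mapsto \ell m\) follows from the implicit function theorem at the transversal intersection.

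The main obstacle is the orientation-consistency step of the upper bound. The algebraic count from Poincar\'e duality is immediate, but turning it into the geometric count requires tracking canonical orientations on cut loci across the great-sphere fibration structure at every pair of points. Without the Hopf-like rigidity supplied by Lemma~\ref{lemma:projective.plane.bundle}, two cut loci homologous to one another could a priori intersect at several points with cancelling signs; the specific structure of Blaschke geometry at the model projective planes is precisely what rules this out.
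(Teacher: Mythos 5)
Your framework is the same as the paper's (lines are cut loci, the polarity \(r \mapsto \cut{r}\) reduces axioms (1) and (2) to the single claim that two distinct cut loci meet in exactly one point, transversality comes from Proposition~\ref{proposition:cut}, and smoothness of the incidence maps comes from the implicit function theorem at a transverse intersection), and the lower bound via Poincar\'e duality of \([\cut{a}]\) with the hyperplane class is fine. The gap is exactly where you flag it: the upper bound. Lemma~\ref{lemma:projective.plane.bundle} produces a topological bundle isomorphism between two great sphere fibrations of a \emph{single} unit sphere \(S^{2k-1}\); it supplies no canonical orientation of the normal bundle of a cut locus and no control over the relative signs of the local intersection numbers of \(\cut{a}\) and \(\cut{b}\) at two different points of \(\cut{a} \cap \cut{b}\). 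Two transverse submanifolds of complementary dimension with algebraic intersection number \(\pm 1\) can in general meet in \(3, 5, \dots\) points with cancelling signs, and nothing in your sketch rules this out. The \(\RP{2}\) model makes the failure concrete: with \(\mathbb{Z}/2\) coefficients you only learn that the count is odd, and after passing to the double cover (a homology \(2\)-sphere) the lifted cut loci are null-homologous circles with algebraic intersection number \(0\), so the homological count tells you nothing there at all.

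The paper closes this gap by Morse theory on the cut locus itself rather than by an intersection-sign argument; see the section on cut locus topology. For \(m_1 \in \cut{m_0}\), the function \(m \mapsto d\of{m_1,m}^2\) restricted to \(\cut{m_0}-\cut{m_1}\) has a unique critical point, at \(m_1\), because at every other point the minimal geodesic from \(m\) to \(m_1\) is not normal to \(\cut{m_0}\) and so the restricted function has nonzero derivative; reverse gradient flow then exhibits \(\cut{m_0}-\cut{m_1}\) as a single open cell. Since \(\cut{m_0}\) is a sphere of dimension \(n/2\) and \(\cut{m_0} \cap \cut{m_1}\) is a finite set of, say, \(k\) points by your transversality step, the resulting CW (or Morse) decomposition of a connected sphere with \(k\) top critical points and one open cell forces \(k=1\); no orientations are needed, and the \(\RP{2}\) case is covered on the same footing. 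I would replace your final paragraph with this argument, note that it must also be run for an arbitrary pair of distinct points \(a,b\) (not only \(b \in \cut{a}\)) since both incidence axioms demand it, and observe that injectivity of \(r \mapsto \cut{r}\), which you use implicitly to identify lines with points, itself follows from the one-point-intersection claim because \(\cut{a}=\cut{b}\) with \(a \ne b\) would put the positive-dimensional set \(\cut{a}\) inside \(\cut{a'} \cap \cut{b'}\) for suitable distinct \(a',b'\).
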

By theorem~\ref{theorem:proj.plane.diffeo}, it is therefore diffeomorphic to its model.
\begin{proof}
Cut loci intersect transversely, and by our cohomology calculation they do so at a unique point, so they are the lines of a unique smooth projective plane structure.
\end{proof}

\section{Projective spaces}

There is a more general concept of \emph{projective space} \cite{Beutelspacher/Rosenbaum:1998}: an incidence geometry \((P,L,F)\) so that
\begin{enumerate}
\item Any two points lie on a unique line.
\item (Veblen--Young): any line passing through two sides of a triangle passes through the third as well.
\item Any line contains at least three points.
\item There are at least two lines.
\end{enumerate}
A \emph{subspace} \(S \subset P\) is a set of points so that for any two points \(p, q \in S\), all points of the line \(pq\) lie in \(S\).
The \emph{span} of a set of points is the smallest subspace containing it.
The \emph{dimension} of a projective space is \(n\) if \(P\) is the span of \(n+1\) points, but not of \(n\) 
points.
Every projective space of dimension \(n \ge 3\) is isomorphic to the projective space \(\mathbb{P}^n_k\) over a division ring \(k\), with \(k\) uniquely determined up to isomorphism \cite{Beutelspacher/Rosenbaum:1998} p. 78 theorem 2.7.1, p. 117 theorem 3.4.2.
It is not clear how to define \emph{smooth} projective spaces.

There are two natural definitions of lines on a Blaschke manifold: in the first natural definition, a \emph{line}\label{def:line} in a Blaschke manifold is the set of points of maximal distance from the set of points of maximal distance from two distinct points.
Axioms (1), (3) and (4) are satisfied.
Looking at tangent and normal spaces, we see that lines are smooth submanifolds.
It is not known whether Blaschke manifolds are projective spaces, i.e. whether lines satisfy the Veblen--Young axiom.

In the second natural definition, a \emph{line} in a Blaschke manifold is the union of all geodesics between two maximally distant points.
Axioms (2) and (3) are satisfied, but the other axioms are unknown.

\section{Total geodesy}

\begin{lemma}
Any compact connected totally geodesic submanifold of a Blaschke manifold is itself a Blaschke manifold of the same diameter.
\end{lemma}
\begin{proof}
Recall that all geodesics are periodic in any Blaschke manifold \(M\), with period twice the diameter.
A totally geodesic submanifold \(M' \subset M\) is a submanifold whose geodesics (in the induced metric) are also geodesics of \(M\) that happen to lie in \(M'\).
Let \(D,D'\) be the diameters of \(M,M'\).
The geodesics in \(M\) minimize up to length \(D\).
Therefore the geodesics in \(M'\) also minimize to at least length \(D\).
They can't minimize beyond length \(D'\) so \(D \le D'\).
Take any two points of \(M'\) whose distance in \(M'\) is equal to \(D'\).
They are connected by a minimal geodesic, by the Hopf--Rinow theorem.
Every geodesic has period \(2D\); split it into two halves, and see that it stops minimizing at length at most \(D\).
So \(D' \le D\).
The geodesics in \(M'\) minimize distance up to the diameter, i.e. \(M'\) is Blaschke.
\end{proof}

In particular, the largest dimensional flat totally geodesic torus is of dimension 1: a closed geodesic.
Every compact Riemannian manifold contains a periodic geodesic, by a theorem of Lyusternik and Fet \cite{Bott:1982}.
The product of two compact Riemannian manifolds contains a product of geodesics, a totally geodesic torus: no Blaschke manifold is a product.

The cut loci of points of a Blaschke manifold are compact connected submanifolds; we don't know whether they are totally geodesic.
Any totally geodesic submanifold is determined by any one its tangent planes.
The intersection of any two totally geodesics submanifolds is a submanifold of ``expected'' dimension, i.e. with dimension equal to the dimension of the intersections of their tangent planes.
If all cut loci are totally geodesic, then the cut loci of the cut loci are given by intersecting, i.e. if the diameter of \(M\) is \(D\), and we let \(pM\) be the cut locus of \(p\) in \(M\), then \(q(pM)=p(qM)=(pM) \cap (qM)\) for any two points \(p, q \in M\) with \(q \in pM\).
In particular, the cut loci of the cut loci are totally geodesic too.
So we have an induction strategy for proving the Blaschke conjecture for Blaschke manifolds whose cut loci are totally geodesic.

\begin{theorem}%
[Rovenskii and Toponogov \cite{Rovenskii:1998,Rovenskii/Toponogov:2008}]%
\label{theorem:RT}
Suppose that \(M\) is a compact connected Riemannian manifold of dimension \(kn\), where \(k= 2,4\) or \(8\).
If \(k=8\), we also suppose that \(M\) is 16-dimensional.
Suppose further that
\begin{enumerate}
\item for every point \(m \in M\) and tangent vector \(v \in T_m M\), there is a \(k\)-dimensional totally geodesic submanifold \(S_v \subset M\) isometric to the unit Euclidean sphere tangent to \(v\) at \(m\) and
\item for any \(v_1, v_2 \in T_m M\), the submanifolds \(S_{v_1}, S_{v_2}\) are either equal or intersect only at \(m\).
\end{enumerate}
Let \(M_0\) be the compact rank 1 symmetric space of the dimension as \(M\) and with lines of dimension \(k\), scaled so that the lines of \(M_0\) are isometric to unit Euclidean spheres.
Then \(M\) has volume at least that of \(M_0\), and equality occurs just when \(M\) is isometric to \(M_0\).
\end{theorem}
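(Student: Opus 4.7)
My plan is to promote the local pattern of totally geodesic unit spheres to global model geometry in three stages: (i) extract a great sphere fibration of each tangent sphere from the family $\curly{S_v}$, (ii) use the isometry $S_v \cong S^k$ to pin down Jacobi fields along every geodesic in $k-1$ directions, and (iii) feed this into a Berger--Kazdan integral inequality to get $\vol{M} \ge \vol{M_0}$, with the equality case forcing isometry.

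For Step (i), hypothesis (2) implies that the tangent $k$-planes $T_m S_v$ partition $T_m M \setminus 0$ into $k$-dimensional linear subspaces, so intersecting with the unit sphere gives a great $(k-1)$-sphere fibration $S^{k-1} \to S^{kn-1} \to \bar{S}_m$ with base of dimension $k(n-1)$. In the allowed dimensions $(k, n) \in \curly{(2, n), (4, n), (8, 2)}$, the classification results earlier in the paper---Lemma~\ref{lemma:projective.plane.hopf}, Lemma~\ref{lemma:projective.plane.bundle}, and the discussion of great circle, great $3$-sphere and great $7$-sphere fibrations---identify this topologically with a Hopf fibration, so that $\bar{S}_m$ is $\CP{n-1}$, $\HP{n-1}$, or $\OP{1}$ respectively.

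For Step (ii), along any unit-speed geodesic $\gamma$ with $\gamma(0) = m$ and $\dot\gamma(0) = v$, the totally geodesic $S_v$ contains $\gamma$ up to arclength $\pi$ (one full half-period of the round sphere). Parallel transport of an orthonormal basis of $T_m S_v \ominus \mathbb{R} v$ along $\gamma$ produces $k-1$ unit normal parallel fields $\nu_1, \dots, \nu_{k-1}$ tangent to $S_v$ along $\gamma$, and the Gauss equation combined with $S_v \cong S^k$ gives $K\of{\dot\gamma, \nu_i} \equiv 1$. So in these $k-1$ ``vertical'' directions the Jacobi equation of $M$ coincides with that of the round unit sphere, forcing the first conjugate point to occur at arclength exactly $\pi$, and yielding $\delta_M = \pi$.

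For Step (iii), apply the volume formula $\vol{M} = \int_B \det \exp'$ of Section~\ref{section:Volume.and.Jacobi}. Along each geodesic, split the normal Jacobi fields into the $k-1$ vertical directions of Step (ii) and the remaining $k(n-1)$ ``horizontal'' directions identified by the Hopf structure of Step (i). The vertical part contributes to $\det \exp'$ exactly as in $M_0$. For the horizontal part, use the Jacobi--Riccati comparison underlying Theorem~\ref{theorem:scalar} together with Kazdan's integral inequality \cite{Kazdan:1982} to bound the horizontal contribution below by its model value. Integrating over unit tangent vectors and over the radius in $B$ then yields $\vol{M} \ge \vol{M_0}$. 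Equality in Kazdan's inequality forces the horizontal sectional curvatures to equal their model values, and combined with the vertical data this determines the curvature tensor pointwise; the Cartan--Ambrose--Hicks theorem then lifts this to a global isometry $M \cong M_0$.

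The main obstacle will be the horizontal Riccati comparison in Step (iii). The hypothesis directly controls only $k-1$ of the $kn-1$ normal directions along any geodesic, and the horizontal splitting comes from a pointwise algebraic structure on tangent spaces rather than from a genuine Riemannian submersion $M \to \bar{S}$. Turning this fiberwise Hopf structure into a global, geodesic-uniform Riccati inequality, and verifying that Kazdan's integral inequality saturates only for $M_0$, is the technical heart of the proof and is what forces the restriction to the three dimension ranges $(k, n)$ in which the Hopf classification of great sphere fibrations is available.
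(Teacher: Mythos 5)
Your outline matches the strategy the paper itself gives for this theorem: the paper does not reprove the Rovenskii--Toponogov result but describes its proof in one sentence as a Berger--Kazdan style analysis of Jacobi fields along radial geodesics, \emph{split into those tangent and those normal to the totally geodesic spheres} \(S_v\), with the tangent Jacobi fields explicitly known because the curvature along \(S_v\) is that of the round sphere --- exactly your Steps (ii) and (iii). Two caveats. First, your Step (i) is not part of that strategy and is not needed for the volume inequality: the splitting of Jacobi fields only requires that the planes \(T_m S_v\) partition \(T_m M \setminus 0\) (which does follow from hypothesis (2) and total geodesy), not that the resulting great sphere fibration be topologically a Hopf fibration. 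Consequently your explanation of the dimension restriction is off: the hypothesis that \(M\) be 16-dimensional when \(k=8\) is there simply because the model \(M_0=\OP{n}\) does not exist for \(n\ge 3\), not because the classification of great 7-sphere fibrations is unavailable. Second, you correctly identify the comparison for the normal (``horizontal'') Jacobi fields as the technical heart, but you leave it as an acknowledged obstacle rather than carrying it out; as written the proposal is therefore an outline with a genuine gap at precisely the step where all the work lies --- the same step the paper delegates to \cite{Rovenskii:1998,Rovenskii/Toponogov:2008} and to the Kazdan-type integral inequality of \cite{Chavel:2006} pp.~319--331.
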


The proof is again by use of Jacobi vector fields along radial geodesics, in geodesic normal coordinates, and has much in common with \cite{Chavel:2006}  pp. 319--331, but splitting the Jacobi fields into those tangent and those normal to the totally geodesic submanifolds, and using the appearance of the known curvature tensor in the Jacobi equation for the tangent Jacobi fields, so that these Jacobi fields are also explicitly known.

\begin{corollary}\label{corollary:RT}
If all lines (in either sense; see page~\pageref{def:line}) of a Blaschke manifold are totally geodesic, then the Blaschke manifold is isometric to its model.
\end{corollary}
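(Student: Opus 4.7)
The plan is to combine Theorem~\ref{theorem:RT} with the volume equality between a Blaschke manifold and its model, established in Section~\ref{section:Volume.and.symplectic}. For \(M\) modelled on \(S^n\) or \(\RP{n}\) the conclusion is already part of the solved Blaschke conjecture (see the table on page~\pageref{table:know}), so assume \(M\) is modelled on \(\CP{n}\), \(\HP{n}\) or \(\OP{2}\) and set \(k=2, 4, 8\) respectively, so that \(\dim M = kn\), with \(\dim M = 16\) in the octave case — matching the hypothesis of Theorem~\ref{theorem:RT} exactly.

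First I would verify hypothesis (1) of Theorem~\ref{theorem:RT}: through every \(m \in M\) and every \(v \in T_m M\) passes a totally geodesic submanifold \(S_v\) isometric to a round Euclidean \(k\)-sphere tangent to \(v\) at \(m\). To produce \(S_v\), pick a second point \(q\) a small distance along the geodesic issuing from \(m\) with velocity \(v\) and form the line \(\ell\) through \(m\) and \(q\) in the sense of page~\pageref{def:line}; iterating proposition~\ref{proposition:cut} and using the assumed total geodesy of every cut locus, one identifies \(\ell\) as a compact, connected, totally geodesic \(k\)-dimensional submanifold of \(M\) containing \(m\) with \(v \in T_m \ell\). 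Being totally geodesic in a Blaschke manifold, \(\ell\) is itself a Blaschke manifold of the same diameter, of dimension \(k \in \{2,4,8\}\), and inherits the cohomology of \(S^k\) from the corresponding line in the model. The already-solved sphere case of the Blaschke conjecture (Berger--Kazdan--Weinstein--Yang, via inequality~\eqref{equation:Berger.inequality}) then forces \(\ell\) to be isometric to a round Euclidean \(k\)-sphere, of the correct radius.

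For hypothesis (2), if two lines \(S_{v_1}\) and \(S_{v_2}\) through \(m\) intersect at a second point \(m'\), both coincide with the unique line determined by the pair \((m,m')\), so \(S_{v_1}=S_{v_2}\); distinct tangent \(k\)-planes at \(m\) therefore give lines meeting only at \(m\). With both hypotheses established, Theorem~\ref{theorem:RT} yields \(\vol{M} \ge \vol{M_0}\), with equality only when \(M\) is isometric to \(M_0\). Combining with the equality \(\vol{M} = \vol{M_0}\) supplied by the symplectic volume computation of Section~\ref{section:Volume.and.symplectic}, equality must hold, and \(M\) is isometric to its model.

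The main obstacle is the first step: confirming that the line construction (in either sense) really produces a totally geodesic \(k\)-sphere whose tangent plane at \(m\) can be prescribed to contain any chosen \(v\), and that such a line is unique given two of its points. This requires assembling the iterated tangent/normal decomposition of proposition~\ref{proposition:cut} with the dimension count dictated by the model's cohomology. For the second (union-of-minimal-geodesics) definition, the parallel verification — that this union is smooth, totally geodesic, and fills out \(T_m M\) by varying the pair of endpoints — is the substantive content; once the dimensional bookkeeping is in place, the rest is pure assembly of results already developed in the paper.
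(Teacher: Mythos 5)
Your proposal is correct and follows essentially the same route as the paper: the paper's proof is simply the combination of Theorem~\ref{theorem:RT} (volume of \(M\) at least that of \(M_0\), with equality iff isometric) with the volume equality from Section~\ref{section:Volume.and.symplectic}. Your additional work verifying the hypotheses of Theorem~\ref{theorem:RT} (that each line is a totally geodesic round \(k\)-sphere, via the solved homology-sphere case, and that two lines meet only at one point) is detail the paper leaves implicit, not a different approach.
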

\begin{proof}
Theorem~\vref{theorem:RT} proves that the volume of the Blaschke manifold agrees with that of its model just when they are isometric.
In section~\vref{section:Volume.and.symplectic}, we saw that the volume of any Blaschke manifold is the same as that of its model.
\end{proof}

A geodesic on a Riemannian manifold is \emph{taut} if, for almost every point \(m_0\) of the manifold, the distance function \(m \mapsto d\of{m,m_0}\) restricts to the geodesic to have two critical points.

\begin{theorem}%
[Hebda \cite{Hebda:1981}]
If all geodesics of a Blaschke manifold are taut, then the cut loci of the Blaschke manifold are totally geodesic.
\end{theorem}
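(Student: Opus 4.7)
The plan is to argue by contradiction: if the cut locus $m_0^\perp$ of some point $m_0 \in M$ fails to be totally geodesic at some $c \in m_0^\perp$ in a direction $v \in T_c m_0^\perp$, meaning the second fundamental form satisfies $\mathrm{II}(v, v) \ne 0$, I would exhibit a positive-measure family of perturbations of $m_0$ along which a particular geodesic acquires more than two critical points of the distance function, contradicting tautness. Let $\gamma$ be the closed geodesic with $\gamma(0) = c$ and $\gamma'(0) = v$. By proposition~\vref{proposition:cut} the vector $v$ is perpendicular to $p'(D)$ for every minimal geodesic $p \colon [0, D] \to M$ from $m_0$ to $c$, so $t = 0$ is a critical point of $f_{m_0} \circ \gamma$, where $f_{m_0} \defeq d(\cdot, m_0)$; in fact it is the global maximum, with value $D$. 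In normal coordinates at $c$, the signed normal displacement of $\gamma$ off $m_0^\perp$ works out to $-\tfrac{1}{2} \mathrm{II}(v, v)\, t^2 + O(t^3)$, so $\gamma$ lies strictly on one side of $m_0^\perp$ for $0 < |t| \ll 1$.

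Next I would study perturbations $m_0' \defeq \exp_{m_0}(\epsilon u)$ for small $\epsilon > 0$ and $u \in T_{m_0} M$. On the open part of $m_0^\perp$ near but not at $c$, each point $x$ has a unique minimal geodesic $p_x$ from $m_0$, so the first variation of arclength gives $d(x, m_0') = d(x, m_0) - \epsilon\, u \cdot p_x'(0) + O(\epsilon^2)$; thus $(m_0')^\perp$ is, to first order in $\epsilon$, a normal displacement of $m_0^\perp$ by a nontrivial linear functional of $u$. For $u$ in an open set of directions, this displacement pushes $(m_0')^\perp$ into the half-tube where $\gamma$ lies near $c$, so the equation ``$\gamma(t) \in (m_0')^\perp$'' becomes a perturbation of $\tfrac{1}{2} \mathrm{II}(v, v)\, t^2 = \epsilon\, \phi(u) + O(\epsilon^2)$ with $\phi(u) > 0$, which has two simple real roots $t_1 < 0 < t_2$ tending to $0$ as $\epsilon \to 0$.

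Each of $\gamma(t_1), \gamma(t_2) \in (m_0')^\perp$ is a local maximum of $f_{m_0'} \circ \gamma$ with value $D$; between them $f_{m_0'} < D$ contributes a local minimum, while the complementary arc of the closed geodesic carries the global minimum of $f_{m_0'} \circ \gamma$, producing at least four critical points, more than the two allowed by tautness. The set of $(u, \epsilon)$ for which this happens has positive measure in $T_{m_0} M$, and under the exponential map corresponds to a set of positive Lebesgue measure of base points $m_0'$ for which $\gamma$ fails to be taut, contradicting the hypothesis. Hence $\mathrm{II}(v, v) = 0$ for every $v \in T_c m_0^\perp$, every $c \in m_0^\perp$, and every $m_0 \in M$, so every cut locus is totally geodesic.

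The hard part is rigorously controlling how $(m_0')^\perp$ deforms as $m_0$ perturbs, since $d(\cdot, m_0)$ is non-smooth precisely on $m_0^\perp$ because of multiple minimal geodesics, and in the higher-codimension models the normal bundle of $m_0^\perp$ is itself multi-dimensional, so the ``which side'' argument must be interpreted vectorially. Combining proposition~\vref{proposition:cut} with the smooth fiber-bundle structure on the conjugate-locus sphere (equivalent condition~6 together with the results of the cut locus geometry section) should parameterize $m_0^\perp$ and $(m_0')^\perp$ smoothly enough to make the first-variation displacement quantitative, but exhibiting a direction $u$ whose displacement genuinely sweeps $(m_0')^\perp$ transversally across $\gamma$ near $c$ is the crux of the argument.
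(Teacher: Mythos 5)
Your strategy is genuinely different from the paper's, and it has a gap that I do not think can be closed: the whole argument lives in the normal direction to \(\cut{m_0}\), and it only makes sense when that normal space is one-dimensional. For a Blaschke manifold modelled on \(\CP{n}\), \(\HP{n}\) or \(\OP{2}\), the cut locus has codimension \(\nu=2\), \(4\) or \(8\). The condition ``\(\gamma(t)\in\cut{(m_0')}\)'' is then a system of \(\nu\ge 2\) scalar equations in the single unknown \(t\): in Fermi coordinates it reads \(-\tfrac12\operatorname{II}(v,v)\,t^2+O(t^3)=\epsilon N_u+O(\epsilon^2)\) as an equation between vectors in the \(\nu\)-dimensional normal space, where \(N_u\) is the first-order normal displacement of the perturbed cut locus. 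This is solvable only when \(N_u\) is (anti)parallel to \(\operatorname{II}(v,v)\), a codimension-\((\nu-1)\) condition on \(u\); for generic \(u\) the curve \(\gamma\) simply misses \(\cut{(m_0')}\) near \(c\), and \(f_{m_0'}\circ\gamma\) keeps a single nondegenerate maximum there. So the set of perturbed base points at which you produce extra critical points has measure zero, and tautness --- which by definition only constrains almost every base point --- is not contradicted. A smaller but real error: you assert that points \(x\in\cut{m_0}\) near \(c\) have a unique minimal geodesic from \(m_0\); they never do. Every cut point of a Blaschke manifold is joined to \(m_0\) by a whole great sphere (at least a pair) of minimal geodesics --- which is exactly why \(d(\cdot,m_0)\) fails to be smooth there --- so your first-variation formula for the displacement of \(\cut{(m_0')}\) must be a minimum over that sphere, not a single term.

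The paper's (Hebda's) argument avoids both problems by never asking a geodesic to find a cut locus: it takes two points \(q,r\) already lying in \(\cut{p}\) and joins them by a geodesic \(\gamma\). Then \(d(\cdot,p)\) restricted to \(\gamma\) attains the diameter \(D\) at both \(q\) and \(r\) and attains a minimum somewhere, so tautness forces an entire arc of \(\gamma\) between \(q\) and \(r\) to sit at distance \(D\), i.e.\ inside \(\cut{p}\); hence \(\operatorname{II}(\gamma',\gamma')=0\) at \(q\), and letting \(r\to q\) sweeps out all tangent directions. That argument is codimension-independent. If you want to keep your perturbation idea, it should be restricted to the hypersurface case (the \(\RP{n}\) model), where it plausibly works but where the conjecture is already settled; for the other models you need the secant-geodesic argument.
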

\begin{proof}
Take two points \(q,r\) in the same cut loci, the cut locus of some point \(p\).
The distance from \(p\) is maximal at \(q\) and \(r\), and minimal somewhere along a geodesic \(\gamma\) through \(q\) and \(r\).
Since our Blaschke manifold \(M\) is taut, a segment of \(\gamma\) containing both \(q\) and \(r\) lies at maximal distance from \(p\).
So the shape operator at \(q\) of the cut locus of \(p\) is null in the direction tangent to \(\gamma\).
Since this occurs for every point \(r\) in the cut locus of \(p\), taking all \(r\) close to \(q\) in that cut locus, we find that the shape operator is null in all directions through \(q\), so the cut locus is totally geodesic.
\end{proof}

\begin{corollary}%
[Hebda \cite{Hebda:1981}]
If all cut loci of a Blaschke manifold are totally geodesic, then the Blaschke manifold is isometric to its model.
In particular, any Blaschke manifold whose geodesics are taut is isometric to its model.
\end{corollary}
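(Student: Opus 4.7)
The plan is to reduce the first assertion to Corollary~\vref{corollary:RT} (which says that if all lines of a Blaschke manifold are totally geodesic, then the manifold is isometric to its model), and to deduce the ``in particular'' clause from the preceding theorem of Hebda (which says that tautness of geodesics implies total geodesy of cut loci). So the real work is: assuming that all cut loci are totally geodesic, show that all lines are totally geodesic.

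The key step is to unwind the first definition of a line from page~\pageref{def:line}. For distinct points \(p,q \in M\), the set of points at maximal distance from \(\{p,q\}\) is the common cut locus \(\cut{p} \cap \cut{q}\); the line through \(p\) and \(q\) is then the set of points at maximal distance from that intersection, so I would rewrite it as
\[
L_{p,q} \;=\; \bigcap_{y \in \cut{p} \cap \cut{q}} \cut{y},
\]
exhibiting each line as an intersection of cut loci of individual points. The excerpt already observes that lines are smooth submanifolds, so there is no subtlety about \(L_{p,q}\) being a submanifold.

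Next I would invoke the standard Riemannian folklore that an intersection of totally geodesic submanifolds, if it is itself a smooth submanifold, is totally geodesic: a geodesic issuing from a point of the intersection with initial velocity tangent to the intersection is \emph{a fortiori} tangent to each submanifold in the family, hence by total geodesy of each submanifold remains inside every one of them, hence inside the intersection. Applied to \(L_{p,q}\), with the hypothesis that each \(\cut{y}\) is totally geodesic, this gives total geodesy of every line, and Corollary~\vref{corollary:RT} finishes the first assertion.

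The main obstacle is really just this definition-chasing first step: confirming that \(L_{p,q}\) admits the displayed expression as an intersection of cut loci of \emph{individual} points (so that our hypothesis on cut loci applies termwise), since everything afterwards is general Riemannian folklore combined with results already established in the excerpt. For the ``in particular'' clause, tautness of geodesics implies by Hebda's theorem that all cut loci are totally geodesic, so the first part of the corollary applies.
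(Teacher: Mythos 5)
Your proposal is correct and follows essentially the same route as the paper: the paper's proof likewise observes that lines in the first sense are intersections of cut loci, hence totally geodesic when all cut loci are, and then invokes Corollary~\vref{corollary:RT}, with the ``in particular'' clause following from Hebda's tautness theorem. Your version merely makes explicit the definition-chasing (writing the line as an intersection of cut loci of individual points) and the folklore fact that an intersection of totally geodesic submanifolds which is a smooth submanifold is totally geodesic, both of which the paper takes for granted from its earlier discussion in the total geodesy section.
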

\begin{proof}
Recall that the lines in the first sense are the intersections of cut loci, so totally geodesic, so we apply corollary~\vref{corollary:RT}.
\end{proof}

\section{Exotic spheres and projective spaces}

The proof that Blaschke homology spheres are isometric to their models, as described in section~\vref{section:Volume.and.Jacobi}, uses curvature estimates.
The curvature of the sphere is very peculiar, so it is not surprising that there are no exotic Blaschke homology spheres.
On the other hand, the proof above that a Blaschke manifold is a cohomology sphere or cohomology projective space works identically for any compact Riemannian manifold \(M\) with point \(p\) so that all geodesics through \(p\) are embedded circles of equal length; these are known as \(SC^m\) manifolds \cite{Besse:1978} p. 1.
There is an exotic smooth structure on the 10-dimensional sphere bearing an \(SC^m\) metric \cite{Besse:1978} appendix C; there is even one bearing a metric which is \(SC^m\) at two points, with a foliation by isoparametric hypersurfaces \cite{Qian/Tang:2015} p. 626 theorem 5.1.
Cohomology projective spaces are classified up to diffeomorphism \cite{Kramer/Stolz:2007}.
In particular, the Eells--Kuiper cohomology quaternionic projective spaces \cite{Eells/Kuiper:1962} admit \(SC^m\) metrics \cite{Tang/Zhang:2014}.
It is not known whether any other cohomology projective spaces or cohomology spheres admit \(SC^m\) metrics.

\begin{theorem}%
[Weinstein \cite{Besse:1978} appendix C]
A compact connected manifold \(M\) is the union of the standard Euclidean closed ball and of
a fiber bundle of Euclidean closed balls, glued together by a diffeomorphism of their boundaries, just when there is a Riemannian metric on \(M\) and a point \(p \in M\) so that, for every point \(q \in M\), the set of unit vectors \(u \in T_q M\) tangent to minimal geodesics from \(q\) to \(p\) is a great sphere in the unit sphere in \(T_q M\).
\end{theorem}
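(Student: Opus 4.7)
The plan is to prove the two implications separately, using the exponential map from $p$ for one direction and an explicit metric construction for the other.

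For the $(\Leftarrow)$ direction (metric with great sphere property $\Rightarrow$ decomposition of $M$), let $D = \max_{q \in M} d\of{p,q}$ and $C = \cut{p}$. First I would adapt the argument of proposition~\vref{proposition:cut}: at each $c \in C$ the great sphere $\Sigma_c \subset UT_c M$ of unit tangents to minimal geodesics from $c$ to $p$ is perpendicular to $T_c C$, and in fact $T_c M = T_c C \oplus \operatorname{span}\pr{\Sigma_c}$, so $C$ is a smooth embedded closed submanifold whose normal fibers are the great sphere spans (of locally constant dimension $k$ by continuity). Second, I would verify that $\exp_p$ maps the open ball of radius $D$ in $T_p M$ diffeomorphically onto $M \setminus C$, and that its restriction to the sphere of radius $D$ is a great sphere fibration over $C$. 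Shrinking slightly, $\exp_p$ of the closed ball of radius $D - \varepsilon$ is a standard closed Euclidean ball, and its complement is, via the normal exponential of $C$, diffeomorphic to the closed normal disk bundle of $C$, a fiber bundle of Euclidean closed balls; the two pieces are glued along the common boundary sphere bundle.

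For the $(\Rightarrow)$ direction, write $M = \bar B^n \cup_\phi E$ with $\bar B^n$ a standard closed Euclidean unit ball centered at $p$, and $E$ a fiber bundle over some base $X$ with fiber a closed Euclidean ball of dimension $k$. I would build the metric in three stages: put the Euclidean metric on $\bar B^n$; choose any Riemannian metric on $X$, a connection on $E$, and a Riemannian submersion metric on $E$ in which the horizontal distribution is the connection, each fiber is a Euclidean disk of radius $r$, and the zero section $X$ is totally geodesic and horizontal; then interpolate in a thin collar of $\phi$ so that the glued metric is $C^\infty$ and the outward unit normal to $\partial E$ corresponds under $\phi$ to the inward unit normal to $\partial \bar B^n$. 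This ensures that each fiber-radial geodesic of $E$ starting at an $x \in X$ exits $\partial E$ and continues smoothly as a radial geodesic of $\bar B^n$ ending at $p$. Finally I would verify the great sphere property pointwise. For $q$ in the interior of $\bar B^n$ or in the interior of a fiber of $E$ away from $X$, there is a unique radial minimal geodesic to $p$, so the condition holds trivially. At $x \in X$, every direction in the totally geodesic Euclidean disk fiber $F_x$ yields a minimal geodesic to $p$ of length $r + 1$, contributing the full unit sphere of $T_x F_x$, a great $(k-1)$-sphere in $UT_x M$; no direction outside $T_x F_x$ competes because such a geodesic would traverse additional horizontal distance in $X$ before exiting $\partial E$.

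The main obstacle is the collar interpolation in the $(\Rightarrow)$ direction: one must choose the collar and fiber radius $r$ so that three conditions hold simultaneously: the global metric is $C^\infty$ across $\phi$, fiber-radial geodesics of $E$ continue as radial geodesics of $\bar B^n$, and no geodesic from $x \in X$ can reach $p$ in the target length except those tangent to $F_x$. Standard collar-smoothing gives $C^\infty$; the geometric matching of radial rays forces a specific scaling profile; and the no-shortcut condition follows by compactness provided the collar is chosen thin enough relative to the injectivity radius of the chosen metric on $X$.
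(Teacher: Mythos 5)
The paper does not actually prove this theorem; it only cites Besse, Appendix C, so your proposal has to stand on its own. Your \((\Leftarrow)\) direction is essentially right and runs parallel to the paper's own Proposition~1 and Theorem~1, transplanted to the pointed setting: you should add the observation that \(d(p,\cdot)\) is constant on the cut locus \(C\) (this follows from your first step, since each tangent space \(T_cC\) is orthogonal to the span of the link, so \(d(p,\cdot)\) has zero derivative along \(C\), and \(C\) is connected), after which the decomposition into a closed ball and the closed normal disk bundle of \(C\) goes through as you describe.

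The genuine gap is in the \((\Rightarrow)\) direction, precisely at the step you flag as the main obstacle. Matching unit normals across \(\phi\) and smoothing in a thin collar does not make the concatenated curves (fiber-radial in \(E\), then radial in \(\bar B^n\)) into geodesics of the interpolated metric -- a generic smooth interpolation bends them -- and ``thin enough plus compactness'' cannot close this, because the great-sphere condition is an exact statement about the set of minimizers: every fiber-radial ray from \(x\in X\) must reach \(p\) with exactly the same length, and every competing path must be strictly longer. Approximate control degrades to nothing here. The fix (and, in substance, Weinstein's construction) is to build the metric on \(M\setminus\{p\}\cong(0,L]\times S^{n-1}\) globally in polar form \(dt^2+h_t\), where \(h_t\) is a one-parameter family of degenerating metrics on \(S^{n-1}\). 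Then \(t\) is automatically \(1\)-Lipschitz with the radial curves as its unit-speed gradient lines, so \(d(p,(t,\theta))=t\) and the radial curves are exactly the minimizers; the only work is to choose \(h_t\) collapsing to a round metric of shrinking radius as \(t\to 0\) (smoothness at \(p\)) and collapsing the fibers of \(\partial E\to X\) quadratically as \(t\to L\) (smoothness along \(X\), using the linear tubular neighbourhood of the zero section). The links at points of \(X\) are then the unit spheres of the vertical subspaces, hence great spheres. Your submersion metric with round totally geodesic fibers is the correct local model near \(X\), but the argument must be organized around a globally defined radial distance function, not around a collar-smoothing.
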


\section{Tools we might hope to use}
It is unlikely that the tricks above that produce diffeomorphism, homeomorphism or homotopy equivalence with the model can yield isometry; they forget too much geometry.
A compact connected Riemannian manifold (or length space) with a large number of Blaschke points seems likely to be a Blaschke manifold.
In a Blaschke manifold, the cut locus of any point is a submanifold generating the cohomology ring, with intersections governed by the cohomology ring as in the model, so we might use integral geometry.
The spectra of compact rank 1 symmetric spaces are peculiar, and related to their peculiar geodesic behaviour: we might use spectral geometry.
The compact rank 1 symmetric spaces are characterized by their diameter, upper bound on sectional curvature and lower bound on either injectivity radius \cite{Rovenskii:1998} or distance to conjugate points \cite{Shankar/Spatzier/Wilking:2005}.
A Blaschke manifold, rescaled to have the same diameter as its model, with sectional curvature bounded from below by the minimum sectional curvature of its model, is isometric to its model \cite{Su2015}.
Blaschke manifolds are not known to have bounds on any type of curvature, besides the total scalar curvature; see theorem~\vref{theorem:scalar}.

A \emph{projective connection} on a manifold is an equivalence class of affine connection, with two being equivalent when they have the same geodesics as unparameterized curves.
Lebrun and Mason \cite{LeBrun/Mason:2002,LeBrun/Mason:2010} gave an explicit description of the projective connections on \(S^2\) whose geodesics are closed.
It is unclear how to state an analogue of the Blaschke conjecture for projective connections in all dimensions.
An analogue of the sphere: find the projective connections so that all of the geodesics travelling out of any one point intersect at some other point.
We might also insist that all geodesics be periodic.
More generally, we might like to find the projective connections so that the geodesics travelling out of any one point meet only along a smooth submanifold of some particular codimension, and perhaps also be periodic.

A Riemannian manifold is \emph{harmonic} if near each point there is a nonconstant harmonic function depending only on the distance from that point.
Every compact simply connected harmonic manifold is a compact rank 1 symmetric space \cite{Ballmann:2014,Ranjan:2000,Zsabo:1990}, so we might try to construct some harmonic functions.
Every harmonic manifold \(M\) is isometrically immersed into a Euclidean sphere by mapping each point \(m \in M\) to the values \(\pr{f_1(m), f_2(m), \dots, f_s(m)}\) of an orthonormal basis of \(\lambda\)-eigenfunctions \(f_1, f_2, \dots, f_s\) of the Laplacian of \(M\), for any choice of nonzero eigenvalue \(\lambda\) \cite{Ballmann:2014} p. 30 theorem 6.2; we might look for an analoguous map on a Blaschke manifold.

\nocite{*}
\bibliographystyle{amsplain}
\bibliography{mckay-blaschke}
\end{document}